\documentclass[12pt,a4paper]{amsart}
\usepackage[utf8]{inputenc}
\usepackage[english]{babel}
\usepackage{amsmath}
\usepackage{amsfonts}
\usepackage{amssymb}
\usepackage{graphicx}
\usepackage[a4paper]{geometry}

\usepackage{dsfont} 
\usepackage{centernot} 

\newcommand{\nwc}{\newcommand}
\nwc{\nwt}{\newtheorem}
\nwt{theo}{Theorem}
\nwt{coro}[theo]{Corollary}
\nwt{ex}[theo]{Example}
\nwt{prop}[theo]{Proposition}
\nwt{defin}[theo]{Definition}
\nwt{rem}[theo]{Remark}
\nwt{lem}[theo]{Lemma}


\nwc{\mf}{\mathbf} 
\nwc{\blds}{\boldsymbol} 
\nwc{\ml}{\mathcal} 


\nwc{\lam}{\lambda}
\nwc{\del}{\delta}
\nwc{\Del}{\Delta}
\nwc{\Lam}{\Lambda}
\nwc{\elll}{\ell}

\nwc{\IA}{\mathbb{A}} 
\nwc{\IB}{\mathbb{B}} 
\nwc{\IC}{\mathbb{C}} 
\nwc{\ID}{\mathbb{D}} 
\nwc{\IE}{\mathbb{E}} 
\nwc{\IF}{\mathbb{F}} 
\nwc{\IG}{\mathbb{G}} 
\nwc{\IH}{\mathbb{H}} 
\nwc{\IL}{\mathbb{L}}
\nwc{\IN}{\mathbb{N}} 
\nwc{\IP}{\mathbb{P}} 
\nwc{\IQ}{\mathbb{Q}} 
\nwc{\IR}{\mathbb{R}} 
\nwc{\IS}{\mathbb{S}} 
\nwc{\IT}{\mathbb{T}} 
\nwc{\IZ}{\mathbb{Z}} 

\def\bbleft{{\mathchoice {[\mskip-3mu {[}} {[\mskip-3mu {[}}{[\mskip-4mu {[}}{[\mskip-5mu {[}}}}
\def\bbright{{\mathchoice {]\mskip-3mu {]}} {]\mskip-3mu {]}}{]\mskip-4mu {]}}{]\mskip-5mu {]}}}}
\nwc{\setK}{\bbleft 1,K \bbright}
\nwc{\setN}{\bbleft 1,\cN \bbright}
 


\nwc{\va}{{\bf a}}
\nwc{\vb}{{\bf b}}
\nwc{\vc}{{\bf c}}
\nwc{\vd}{{\bf d}}
\nwc{\ve}{{\bf e}}
\nwc{\vf}{{\bf f}}
\nwc{\vg}{{\bf g}}
\nwc{\vh}{{\bf h}}
\nwc{\vi}{{\bf i}}
\nwc{\vI}{{\bf I}}
\nwc{\vj}{{\bf j}}
\nwc{\vk}{{\bf k}}
\nwc{\vl}{{\bf l}}
\nwc{\vm}{{\bf m}}
\nwc{\vM}{{\bf M}}
\nwc{\vn}{{\bf n}}
\nwc{\vo}{{\it o}}
\nwc{\vp}{{\bf p}}
\nwc{\vq}{{\bf q}}
\nwc{\vr}{{\bf r}}
\nwc{\vs}{{\bf s}}
\nwc{\vt}{{\bf t}}
\nwc{\vu}{{\bf u}}
\nwc{\vv}{{\bf v}}
\nwc{\vw}{{\bf w}}
\nwc{\vx}{{\bf x}}
\nwc{\vy}{{\bf y}}
\nwc{\vz}{{\bf z}}
\nwc{\bal}{\blds{\alpha}}
\nwc{\bep}{\blds{\epsilon}}
\nwc{\barbep}{\overline{\blds{\epsilon}}}
\nwc{\bnu}{\blds{\nu}}
\nwc{\bmu}{\blds{\mu}}
\nwc{\bet}{\blds{\eta}}



\nwc{\bk}{\blds{k}}
\nwc{\bm}{\blds{m}}
\nwc{\bM}{\blds{M}}
\nwc{\bp}{\blds{p}}
\nwc{\bq}{\blds{q}}
\nwc{\bn}{\blds{n}}
\nwc{\bv}{\blds{v}}
\nwc{\bw}{\blds{w}}
\nwc{\bx}{\blds{x}}
\nwc{\bxi}{\blds{\xi}}
\nwc{\by}{\blds{y}}
\nwc{\bz}{\blds{z}}


\nwc{\cA}{\ml{A}}
\nwc{\cB}{\ml{B}}
\nwc{\cC}{\ml{C}}
\nwc{\cD}{\ml{D}}
\nwc{\cE}{\ml{E}}
\nwc{\cF}{\ml{F}}
\nwc{\cG}{\ml{G}}
\nwc{\cH}{\ml{H}}
\nwc{\cI}{\ml{I}}
\nwc{\cJ}{\ml{J}}
\nwc{\cK}{\ml{K}}
\nwc{\cL}{\ml{L}}
\nwc{\cM}{\ml{M}}
\nwc{\cN}{\ml{N}}
\nwc{\cO}{\ml{O}}
\nwc{\cP}{\ml{P}}
\nwc{\cQ}{\ml{Q}}
\nwc{\cR}{\ml{R}}
\nwc{\cS}{\ml{S}}
\nwc{\cT}{\ml{T}}
\nwc{\cU}{\ml{U}}
\nwc{\cV}{\ml{V}}
\nwc{\cW}{\ml{W}}
\nwc{\cX}{\ml{X}}
\nwc{\cY}{\ml{Y}}
\nwc{\cZ}{\ml{Z}}

\nwc{\fA}{\mathfrak{a}}
\nwc{\fB}{\mathfrak{b}}
\nwc{\fC}{\mathfrak{c}}
\nwc{\fD}{\mathfrak{d}}
\nwc{\fE}{\mathfrak{e}}
\nwc{\fF}{\mathfrak{f}}
\nwc{\fG}{\mathfrak{g}}
\nwc{\fH}{\mathfrak{h}}
\nwc{\fI}{\mathfrak{i}}
\nwc{\fJ}{\mathfrak{j}}
\nwc{\fK}{\mathfrak{k}}
\nwc{\fL}{\mathfrak{l}}
\nwc{\fM}{\mathfrak{m}}
\nwc{\fN}{\mathfrak{n}}
\nwc{\fO}{\mathfrak{o}}
\nwc{\fP}{\mathfrak{p}}
\nwc{\fQ}{\mathfrak{q}}
\nwc{\fR}{\mathfrak{r}}
\nwc{\fS}{\mathfrak{s}}
\nwc{\fT}{\mathfrak{t}}
\nwc{\fU}{\mathfrak{u}}
\nwc{\fV}{\mathfrak{v}}
\nwc{\fW}{\mathfrak{w}}
\nwc{\fX}{\mathfrak{x}}
\nwc{\fY}{\mathfrak{y}}
\nwc{\fZ}{\mathfrak{z}}


\nwc{\tA}{\widetilde{A}}
\nwc{\tB}{\widetilde{B}}
\nwc{\tE}{E^{\vareps}}
\nwc{\tk}{\tilde k}
\nwc{\tN}{\tilde N}
\nwc{\tP}{\widetilde{P}}
\nwc{\tQ}{\widetilde{Q}}
\nwc{\tR}{\widetilde{R}}
\nwc{\tV}{\widetilde{V}}
\nwc{\tW}{\widetilde{W}}
\nwc{\ty}{\tilde y}
\nwc{\teta}{\tilde \eta}
\nwc{\tdelta}{\tilde \delta}
\nwc{\tlambda}{\tilde \lambda}
\nwc{\ttheta}{\tilde \theta}
\nwc{\tvartheta}{\tilde \vartheta}
\nwc{\tPhi}{\widetilde \Phi}
\nwc{\tpsi}{\tilde \psi}
\nwc{\tmu}{\tilde \mu}

\nwc{\To}{\longrightarrow} 

\nwc{\ad}{\rm ad}
\nwc{\eps}{\epsilon}
\nwc{\ep}{\epsilon}
\nwc{\vareps}{\varepsilon}

\def\N{\mathbb{N}}

\def\R{\mathbb{R}}

\nwc{\lap}{\bigtriangleup}
\nwc{\rest}{\restriction}
\nwc{\Diff}{\operatorname{Diff}}
\nwc{\diam}{\operatorname{diam}}
\nwc{\Res}{\operatorname{Res}}
\nwc{\Spec}{\operatorname{Sp}}
\nwc{\Vol}{\operatorname{Vol}}
\nwc{\Op}{\operatorname{Op}}
\nwc{\supp}{\operatorname{supp}}
\nwc{\Span}{\operatorname{span}}
\nwc{\weaksc}{\overset{\ast}{\rightharpoonup}}
\nwc{\wto}{\rightharpoonup}
\nwc{\charf}{\mathds{1}}
\nwc{\notimplies}{\centernot\implies}

\def\ep{\epsilon}

\def\sq2{\sqrt{2}}

\def\t2{{\mathbb T}^2}
\def\s2{{\mathbb S}^2}

\nwc{\dia}{\varepsilon}
\nwc{\cut}{f}
\nwc{\qm}{u_\hbar}

\def\hto0{\xrightarrow{\hbar\to 0}}

\def\rto0{\xrightarrow{r\to 0}}

\def\Tend#1#2{\mathop{\longrightarrow}\limits_{#1\rightarrow#2}}

\nwc{\la}{\langle}
\nwc{\ra}{\rangle}
\nwc{\lp}{\left(}
\nwc{\rp}{\right)}

\nwc{\bequ}{\begin{equation}}
\nwc{\be}{\begin{equation}}
\nwc{\ben}{\begin{equation*}}
\nwc{\bea}{\begin{eqnarray}}
\nwc{\bean}{\begin{eqnarray*}}
\nwc{\bit}{\begin{itemize}}
\nwc{\bver}{\begin{verbatim}}

%\nwc{\eal}{\end{align}}
\nwc{\eequ}{\end{equation}}
\nwc{\ee}{\end{equation}}
\nwc{\een}{\end{equation*}}
\nwc{\eea}{\end{eqnarray}}
\nwc{\eean}{\end{eqnarray*}}
\nwc{\eit}{\end{itemize}}
\nwc{\ever}{\end{verbatim}}

\title[Observabilty and fractional Schrödinger operators]{Observability results related to fractional Schrödinger operators}
\author[Fabricio Macià]{Fabricio Macià}

\address{M$^2$ASAI, Universidad Politécnica de Madrid.  \newline
ETSI Navales, Avda. de la Memoria, 4. 28040 Madrid, SPAIN.}

\email{fabricio.macia@upm.es}

\thanks{The author acknowledges the support of Ministerio de Ciencia, Innovación y Universidades of the Spanish goverment through grant MTM2017-85934-C3-3-P}

\begin{document}

\begin{abstract}
We establish observability inequalities for various problems involving fractional Schrödinger operators $(-\Delta)^{\alpha/2}+V$, $\alpha>0$, on a compact Riemannian manifold. Observability from an open set for the corresponding fractional Schrödinger evolution equation with $\alpha>1$ is proved to hold as soon as the observation set satisfies the Geometric Control Condition; it is also shown that this condition is necessary when the manifold is the $d$-dimensional sphere equipped with the standard metric. This is in stark contrast with the case of eigenfunctions. We construct potentials on the two-sphere with the property that there exist two points on the sphere such that eigenfunctions of $-\Delta+V$  are uniformly observable from an arbitrarily small neighborhood of those two points. This condition is much weaker than the Geometric Control Condition, which is necessary for uniform observability of eigenfunctions for the free Laplacian on the sphere. The same result also holds for eigenfunctions of  $(-\Delta)^{\alpha/2}+V$, for any $\alpha>0$.
\end{abstract}

\maketitle

\section{Introduction}
Consider a compact Riemannian manifold $(M,g)$, an open subset $\omega\subseteq M$ and the \textit{Laplace-Beltrami} operator (or simply the Laplacian) $\Delta$ on $M$ associated to the metric $g$. Since $-\Delta$ is a non-negative self-adjoint operator on $L^2(M)$, it is possible to define the fractional Laplacian $(-\Delta)^{\alpha/2}$ for $\alpha>0$ using the spectral theorem. 

In this article we are interested in understanding the validity of various observability estimates associated to $(-\Delta)^{\alpha/2}$ or, more generally, to the fractional Schrödinger operator $(-\Delta)^{\alpha/2}+V$, where $V$ is a bounded real potential.  

We first consider the fractional Schrödinger evolution equation:
\begin{equation}\label{e:fschro}
\left\{
\begin{array}{ll}
i\partial_t u(t,x)=(-\Delta )^{\alpha/2}u(t,x)+V(x)u(t,x),& (t,x)\in\IR\times M,\medskip\\ 
u|_{t=0}=u^0\in L^2(M),&
\end{array}
\right.
\end{equation}
where $\alpha>0$ and $V\in\cC^\infty(M;\IR)$. 

The fractional Schrödinger evolution is said to be observable from an open set $\omega\subseteq M$ and time $T>0$ provided a constant $C=C_{T,\omega}>0$ exists such that for every initial datum $u^0\in L^2(M)$ the corresponding solution to \eqref{e:fschro} satisfies the \textit{observability estimate}
\begin{equation}\label{e:obss}\tag{$O(T,\omega)$}
||u^0||_{L^2(M)}^2\leq C \int_0^T\int_\omega |e^{-it((-\Delta )^{\alpha/2}+V)}u^0(x)|^2dx\,dt.
\end{equation}
The observability property is crucial in establishing controllability results for the fractional Schrödinger evolution or uniform stabilization for the  semi-groups generated by bounded, non self-adjoint perturbations of $(-\Delta )^{\alpha/2}$, see \cite{LionsBook}.

We are also interested in understanding the validity of a weaker observability property, regarding the eigenfunctions of the fractional Schrödinger operator. Since $M$ is compact, $(-\Delta)^{\alpha/2}+V$ has compact resolvent and therefore its spectrum is discrete and bounded below,  and there exists an orthonormal basis  of $L^2(M)$ consisting of eigenfunctions of $(-\Delta)^{\alpha/2}+V$. 

We say that the eigenfunctions of the fractional Schrödinger operator  are observable from an open subset $\omega\subseteq M$ if a constant $C=C_{\omega}>0$ exists such that 
\begin{equation}\label{e:obse}\tag{$O_\mathrm{E}(\omega)$}
||\varphi||_{L^2(M)}\leq C_\omega \|\varphi\|_{L^2(\omega)},
\end{equation}
holds for every $L^2(M)$-eigenfunction of the Schrödinger operator:
\[
((-\Delta)^{\alpha/2}+V(x))\varphi(x)=\lambda\varphi(x),\quad x\in M.
\]
Note that it is crucial in this definition that the constant $C_\omega$ is required to be uniform with respect to the eigenvalue $\lambda$.

If $\varphi$ is an eigenfunction of $(-\Delta)^{\alpha/2}+V$ with $\|\varphi\|_{L^2(M)}=1$ and eigenvalue $\lambda$ then:
$u(t,\cdot)=e^{-it\lambda}\varphi$  is a solution to the fractional Schrödinger equation \eqref{e:fschro}.
Therefore \eqref{e:obss} implies \eqref{e:obse}. The observability property for eigenfunctions is interesting as it describes localization properties of high-energy eigenstates, a fundamental question in mathematical quantum mechanics. It has been shown in \cite{PTZ} that this property is related to controllability properties of wave equations with random initial data.

It is known since the seventies that both \eqref{e:obss} and \eqref{e:obse} hold under a geometric condition that relates the geometry of the observability region $\omega$ and the geodesics of the manifold $(M,g)$; more detailed references will be given after the statement of our first result. 

Let us first recall the precise definitions. The open subset $\omega\subseteq M$ satisfies the \textit{Geometric Control Condition (GCC)} provided that every geodesic of $(M,g)$ intersects $\omega$. Given $T>0$, we say that $\omega$ satisfies GCC$_T$ whenever every geodesic segment of length smaller than $T$ intersects $\omega$.
Since $M$ is compact, $\omega$ satisfies GCC if and only if it satisfies GCC$_T$ for some $T>0$.

\begin{theo}\label{t:suff}
Let $(M,g)$ be a compact smooth Riemannian manifold without boundary, $T>0$ and $\omega\subseteq M$ open. Then the following results hold.
\begin{itemize}
\item if $0<\alpha<1$ and $\overline{\omega}\neq M$ then \eqref{e:obss} does not hold no matter how large $T>0$ is.
\item if $\alpha=1$ and $\omega$ satisfes GCC$_T$ then \eqref{e:obss} holds. Conversely, if $M\setminus\overline{\omega}$ contains a closed geodesic segment of length $T$ then \eqref{e:obss} does not hold.
\item if $\alpha>1$ and $\omega$ satisfes GCC then \eqref{e:obss} holds for every $T>0$. 
\end{itemize}
\end{theo}
We give a unified proof of this result in Section \ref{s:suf}. Some of the statements in Theorem \ref{t:suff} hold for time-dependent potentials; Theorem \ref{t:tdep} in Section \ref{s:suf} presents their precise formulation. 

The case $\alpha=1$ corresponds to the wave equation and was first proved by Rauch and Taylor in \cite{RauchTaylor75}. The necessary condition is related to the existence of highly localized solutions to the wave equation known as Gaussian beams or wave-packets, see \cite{RalstonGB, MZ02}. Note that the necessary condition for observability is slightly weaker than GCC$_T$, since geodesics are just required to intersect the larger set $\overline{\omega}$; the sufficiency of this less restrictive condition depends in general of the particular configuration of geodesics that intersect $\overline{\omega}$ precisely at the boundary $\partial\omega$ (the so-called \textit{grazing rays}). 

When $\alpha=2$, problem \eqref{e:fschro} is the usual Schrödinger equation. The sufficiency of GCC in this case was established by Lebeau \cite{Lebeau1992}, in the more general setting of manifolds with boundary. We present here a rather direct proof in the boundaryless setting that remains valid for every $\alpha>1$.

Finally, note that when $\alpha>2$ (or $\alpha>1$ but $T$ large enough so that GCC$_T$ holds) and $V=0$ , the sufficiency of GCC can also be deduced from that of the case $\alpha=2$ from abstract, functional-theoretic arguments, see \cite[Theorem 3.5]{Mi12}.

In general, GCC is far from being necessary for the observability property when $\alpha=2$.  The simplest setting where  this kind of behavior takes place is the flat torus $\IT^d=\IR^d/\IZ^d$, where \eqref{e:obss} holds for every open set $\omega\subseteq \IT^d$. This was first proved by Jaffard \cite{Jaffard1990} for the free Laplacian when $d=2$, and generalized to the multidimensional case by Komornik \cite{Komornik1992}. The addition of a potential does not change the final result: see \cite{BZ12, BBZ14} for results on the two-dimensional case $d=2$ and \cite{AM14} for the general case. The article \cite{AFM15} provides more general results that encompass the general case $\alpha>1$ on the torus $\IT^d$.

Other situations where observability for the Schrödinger equation has been established include Zoll manifolds (\textit{i.e.} manifolds all of whose geodesics are closed) \cite{MaciaDispersion, MacRiv16} and the Euclidean disk \cite{ALM16,ALMCras}. On the disk, a necessary and sufficient condition for observability is that the observability region intersects the boundary of the disk in an open arc. Note that this situation is intermediate between the torus (for which observability holds unconditionally) and the sphere, for which it turns out that GCC is necessary. This is the content of the next result.
 
\begin{theo}\label{t:nes}
Suppose $(M,g)=(\IS^d,\mathrm{can})$ is the sphere equipped with its canonical metric. Let $\omega\subseteq \IS^d$ be an open set such that $M\setminus \overline{\omega}$ contains a geodesic. Then, for every $\alpha>0$ the following results hold.
\begin{itemize}
\item The observability estimate \eqref{e:obss} does not hold for any $T>0$;
\item If in addition $V=0$, then \eqref{e:obse} fails as well. 
\end{itemize}
\end{theo}
The proof of this theorem is given in Section \ref{s:nec}, and covers the case of time-dependent potentials for the first part of the statement. It is a variation of the results in  \cite{MaciaDispersion, MacRiv16} for $\alpha=2$, and here we follow closely the strategy of the proofs in those references. The statement concerning the observability of eigenfunctions for the free Laplacian on the sphere is well-known, although we give here a complete proof for the sake of completeness. 

It turns out that the conditions under which \eqref{e:obse} holds change dramatically when a perturbation $V$ is present. This was first analyzed in \cite{MacRiv19}, where it is shown that, on the $d$-dimensional sphere,  \eqref{e:obse} holds provided $\omega\subseteq \IS^d$ satisfies a geometric condition involving the potential $V$ (what we call here $V$-GCC, see Section \ref{s:vgcc}) that is in general much weaker than GCC. Here we present a proof for fractional Schrödinger operators based on an observability estimate over long times for the fractional Schrödinger evolution, see Theorem \ref{t:lt} in Section \ref{s:vgcc} which is of independent interest. This is based on the detailed study of the long-time dynamics of Schrödinger flows presented in \cite{MaciaAv, MacRiv16}.

Our last result, valid in two dimensions, gives an explicit construction of a family of smooth potentials on the sphere such that \eqref{e:obse} holds when $\omega$ is an arbitrarily small neighborhood of two fixed points on the sphere. This is in stark contrast with the same situation for the free Laplacian, for which GCC is a necessary condition for \eqref{e:obse}. 
\begin{theo}\label{t:minimal}
Suppose that $(M,g)=(\IS^2,\mathrm{can})$ and $\alpha>0$.There exist a non-empty family of potentials $\cT\subset\cC^\infty(\IS^2;\IR)$ such that, for every $V\in\cT$ there exist two distinct points $p,q\in\IS^2$ such that \eqref{e:obse} holds for any open set $\omega\subseteq \IS^2$ that contains $p,q$.
\end{theo}
This result is proved in Section \ref{s:potential}. As it will be clear from the proof, the size of $V$ in any reasonable norm can be arbitrarily small. The result follows from the analysis of the flow of a certain Hamiltonian vector field on the sphere constructed from the potential $V$.

\section{Observability for the fractional Schrödinger evolution on a compact manifold. Proof of Theorem \ref{t:suff}}\label{s:suf}
Most of the proofs we present remain valid when the potentials are time-dependent. In this section we will consider the fractional Schrödinger equation:
\begin{equation}\label{e:fracschro}
\left\{
\begin{array}{ll}
i\partial_t u(t,x)=(-\Delta )^{\alpha/2}u(t,x)+V(t,x)u(t,x),& (t,x)\in\IR\times M,\medskip\\ 
u|_{t=0}=u^0\in L^2(M),&
\end{array}
\right.
\end{equation}
where $\alpha>0$ and $V\in\cC^\infty_c(\IR\times M;\IR)$ is bounded together with all its derivatives. 
The following observability results still hold in this more general context.
\begin{theo}\label{t:tdep}
Let $(M,g)$ be a smooth compact Riemannian manifold without boundary and $\omega\subseteq M$ an open set. Suppose that the observability estimate 
\begin{equation}\label{e:obst}
||u^0||_{L^2(M)}^2\leq C \int_0^T\int_\omega |u(t,x)|^2dx\,dt.
\end{equation}
holds for every solution $u$ to \eqref{e:fracschro}. Then,
\begin{itemize}
\item when $0<\alpha<1$ the set $\omega$ must be dense in $M$.
\item when $\alpha=1$ the set $\overline{\omega}$ must intersect all closed geodesic segments of length $T$.
\end{itemize}
\end{theo}
The proof of this result, together with that of Theorem \ref{t:suff}, are presented at the end of this section. Its starting point is a  compactness argument based on the analysis of semiclassical defect measures. This type of approach goes back to \cite{Lebeau1996}. It will require several preparatory steps.

Given $\chi\in\cC^\infty_c((0,\infty))$ and $h>0$ define:
\[
\cF^h_\chi\,:\,L^2(M)\To C(\IR;L^2(M))\,:\,u^0\longmapsto \chi(-h^2\Delta) u,
\]
where $u$ solves \eqref{e:fracschro} with initial datum $u^0$.
\begin{lem}\label{l:ape}
Let $\chi\in\cC^\infty_c((0,\infty))$ and $\sigma_\alpha\in\cC^\infty_c((0,\infty))$ such that $\sigma_\alpha(s)=s^{\alpha/2}$ for $s\in\supp\chi$. Then for every $u^0\in L^2(M)$ and $h>0$  the function $u_h:=\cF^h_\chi u^0$ solves
\begin{equation}\label{e:sfracschro}
\left\{
\begin{array}{l}
ih^\alpha\partial_t u_h(t,x)=\sigma_\alpha(-h^2\Delta )u_h(t,x)+h^\alpha V(t,x)u_h(t,x)+h^{1+\alpha}F_h(t,x), \medskip\\
u_h|_{t=0}=\chi(-h^2\Delta)u^0,
\end{array}
\right.
\end{equation}
and there exists $C>0$ such that for every $t\in\R$ and $h\in (0,1]$,
\begin{equation}\label{e:estFh}
\|F_h(t,\cdot)\|_{L^2(M)}\leq C\|u^0\|_{L^2(M)}.
\end{equation}
If $u$ is the solution to \eqref{e:fracschro} with initial datum $u^0$ then
\begin{equation}\label{e:estuuh}
 \|u(t,\cdot)-u_h(t,\cdot)\|_{L^2(M)}\leq \|(1-\chi(-h^2\Delta) )u^0\|_{L^2(M)}+Ch|t|\|u^0\|_{L^2(M)}.
\end{equation}
\end{lem}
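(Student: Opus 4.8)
The plan is to observe that, by construction, $u_h=\cF^h_\chi u^0=\chi(-h^2\Delta)u$, where $u$ solves \eqref{e:fracschro} with datum $u^0$, and to obtain \eqref{e:sfracschro} simply by applying the bounded, time-independent Fourier multiplier $\chi(-h^2\Delta)$ to the equation for $u$. Two elementary manipulations do it. First, since $(-\Delta)^{\alpha/2}=h^{-\alpha}(-h^2\Delta)^{\alpha/2}$ while $\sigma_\alpha(s)=s^{\alpha/2}$ on $\supp\chi$, the spectral theorem gives the operator identity $\chi(-h^2\Delta)(-\Delta)^{\alpha/2}=h^{-\alpha}\sigma_\alpha(-h^2\Delta)\chi(-h^2\Delta)$ (all operators here being bounded, in fact smoothing, because $\chi\in\cC^\infty_c((0,\infty))$), so after applying $\chi(-h^2\Delta)$ the term $(-\Delta)^{\alpha/2}u$ turns into $h^{-\alpha}\sigma_\alpha(-h^2\Delta)u_h$. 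Second, for the potential I would write $\chi(-h^2\Delta)(V(t,\cdot)u)=V(t,\cdot)u_h+[\chi(-h^2\Delta),V(t,\cdot)]u$ and simply \emph{define}
$$F_h(t,\cdot):=h^{-1}[\chi(-h^2\Delta),V(t,\cdot)]u(t,\cdot);$$
multiplying the resulting identity by $h^\alpha$ is then exactly \eqref{e:sfracschro}.

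Granting the uniform commutator bound $\|[\chi(-h^2\Delta),V(t,\cdot)]\|_{L^2\to L^2}\leq Ch$ for $h\in(0,1]$ (discussed below), the remaining estimates are short. For \eqref{e:estFh}: the flow $t\mapsto u(t,\cdot)$ preserves the $L^2$-norm because $(-\Delta)^{\alpha/2}+V(t,\cdot)$ is self-adjoint, hence $\|F_h(t,\cdot)\|_{L^2}\leq h^{-1}\cdot Ch\cdot\|u(t,\cdot)\|_{L^2}=C\|u^0\|_{L^2}$. For \eqref{e:estuuh}: set $v:=u-u_h=(1-\chi(-h^2\Delta))u$; applying $1-\chi(-h^2\Delta)$ to \eqref{e:fracschro} and commuting functions of $\Delta$ as above shows that $v$ solves $i\partial_t v=(-\Delta)^{\alpha/2}v+V(t,\cdot)v-[\chi(-h^2\Delta),V(t,\cdot)]u$ with $v|_{t=0}=(1-\chi(-h^2\Delta))u^0$. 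An energy estimate, in which the self-adjoint part contributes nothing, gives $\bigl|\frac{d}{dt}\|v(t,\cdot)\|_{L^2}^2\bigr|\leq 2\|[\chi(-h^2\Delta),V(t,\cdot)]u(t,\cdot)\|_{L^2}\|v(t,\cdot)\|_{L^2}\leq 2Ch\|u^0\|_{L^2}\|v(t,\cdot)\|_{L^2}$, whence $\|v(t,\cdot)\|_{L^2}\leq\|v(0,\cdot)\|_{L^2}+Ch|t|\|u^0\|_{L^2}$, which is \eqref{e:estuuh}.

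The one genuinely non-trivial ingredient, and the step I expect to be the main obstacle, is the uniform-in-$t$ commutator estimate $\|[\chi(-h^2\Delta),V(t,\cdot)]\|_{L^2\to L^2}=O(h)$; here the assumption that $V$ is bounded together with all its derivatives enters. It rests on the fact that $\chi(-h^2\Delta)$ is a semiclassical pseudodifferential operator of order $0$ on the compact manifold $M$ — a consequence of the Helffer--Sjöstrand / Helffer--Robert functional calculus for elliptic operators — so that its commutator with multiplication by a smooth function gains one factor of $h$. Concretely one may argue from the Helffer--Sjöstrand formula $\chi(-h^2\Delta)=-\frac1\pi\int_{\IC}\bar\partial\widetilde{\chi}(z)\,(z+h^2\Delta)^{-1}\,L(dz)$, with $\widetilde{\chi}$ an almost-analytic extension of $\chi$: using $[(z+h^2\Delta)^{-1},V]=-(z+h^2\Delta)^{-1}[h^2\Delta,V](z+h^2\Delta)^{-1}$, the identity $[h^2\Delta,V]=h\bigl(h(\Delta V)+2(\nabla V)\cdot(h\nabla)\bigr)$, which is $O(h)$ once sandwiched between two resolvents, and the rapid vanishing of $\bar\partial\widetilde{\chi}$ near the real axis (making the $z$-integral absolutely convergent), one gets the bound with a constant depending only on $\chi$ and finitely many derivatives of $V$. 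Everything else is routine bookkeeping with the spectral calculus.
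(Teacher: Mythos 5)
Your proof is correct and follows the paper's own strategy step for step: apply $\chi(-h^2\Delta)$ to the equation, use the spectral theorem to replace $(-\Delta)^{\alpha/2}$ by $h^{-\alpha}\sigma_\alpha(-h^2\Delta)$ on the frequency-localized range, and set $h^{1+\alpha}F_h = h^\alpha[\chi(-h^2\Delta),V]u$. The only differences are in the choice of auxiliary tools: for the $O(h)$ commutator bound the paper invokes the functional calculus \eqref{e:fc} together with the symbolic commutator expansion \eqref{e:comm} from its Appendix, whereas you rederive it via the Helffer--Sj\"ostrand formula; and your Gronwall-type energy estimate for \eqref{e:estuuh} is just the differential form of the Duhamel argument the paper cites --- all equivalent.
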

\begin{proof}
Apply $\chi(-h^2\Delta)$ to both sides of equation \eqref{e:fracschro} to find that $u_h$ solves:
\[
i\partial_t u_h=(-\Delta )^{\alpha/2}u_h+ Vu_h+[\chi(-h^2\Delta),V]u.
\]
By definition 
\[
(-h^2\Delta )^{\alpha/2}\chi(-h^2\Delta)=\sigma_\alpha(-h^2\Delta )\chi(-h^2\Delta),
\]
and, using the commutator identity \eqref{e:comm} and the functional calculus for the Laplacian \eqref{e:fc},
\[
[\chi(-h^2\Delta),V]u=\frac{h}{i}\Op_h(r)u,
\]
for some $r\in S^{0}(T^*M)$. This proves claims \eqref{e:sfracschro} and \eqref{e:estFh}. Estimate \eqref{e:estuuh} follows from Duhamel's identity.
\end{proof}

The rest of this section will show how the dynamics of $u_h$ when $h$ is small are related to the geodesic flow on the manifold $(M,g)$. We introduce the following notation for the squared Riemannian norm on the cotangent bundle
\[
p(x,\xi):=\|\xi\|^2_x,\quad (x,\xi)\in T^*M.
\]
The geodesic flow $\phi_t$ on $T^*M\setminus\{0\}$ is the flow of the Hamiltonian vector field (taken with respect to the canonical symplectic form) associated to the Hamiltonian $p^{1/2}$. Projections of the trajectories of $\phi_t$ onto $M$ are geodesics of $(M,g)$ parametrized by arc-length.

We next deduce a transport-type equation for the Wigner distributions of solutions to equation \eqref{e:sfracschro} involving the Hamiltonians $p^{\alpha/2}$. Recall that the Wigner distribution $W^h_v\in\cD'(T^*M)$ of a function $v\in L^2(M)$ is defined by
\[
\left\la W^h_v,a\right\ra = (\Op_h(a) v\,|\,v)_{L^2(M)},\quad \forall a\in\cC^\infty_c(T^*M),
\]
where $\Op_h(a)$ stands for the Weyl semiclassical pseudo-differential operator of symbol $a$. Appendix \ref{a:pdo} reviews the facts of the theory of Wigner distributions and semiclassical analysis that are needed here. In what follows, $\{a,b\}$ will denote the Poisson bracket of two functions $a,b$ defined on $T^*M$.
\begin{lem}\label{l:wie}
Let $W_h(t)\in\cD'(T^*M)$ denote the Wigner distribution of the function $\cF^h_\chi u^0(t,\cdot)$ for some $u^0\in L^2(M)$. Then, for every $a\in\cC^\infty_c(T^*M)$:
\begin{equation}\label{e:wignereq}
\frac{d}{dt}\left\la W_h(t),a\right\ra = h^{1-\alpha}\left\la W_h(t),\{\sigma_\alpha\circ p,a\}\right\ra+h^\beta R^h_a(t),
\end{equation}
where $\beta:=\min\{1,2-\alpha\}$ and $|R^h_a(t)|\leq C\|a\|_{\cC^N(T^*M)} \|u^0\|^2_{L^2(M)}$ for some $N>0$ large enough, independent of $a$.
\end{lem}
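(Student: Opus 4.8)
The plan is to differentiate the pairing $\la W_h(t),a\ra = (\Op_h(a) u_h(t)\,|\,u_h(t))$ in $t$, using the evolution equation \eqref{e:sfracschro} satisfied by $u_h=\cF^h_\chi u^0$, and then to identify the leading symbolic term via the semiclassical Egorov/commutator calculus recalled in Appendix \ref{a:pdo}. First I would write
\[
\frac{d}{dt}\la W_h(t),a\ra = \left(\Op_h(a)\,\partial_t u_h\,\middle|\,u_h\right) + \left(\Op_h(a)\,u_h\,\middle|\,\partial_t u_h\right),
\]
and substitute $\partial_t u_h = (ih^\alpha)^{-1}\bigl[\sigma_\alpha(-h^2\Delta)u_h + h^\alpha V u_h + h^{1+\alpha}F_h\bigr]$ from \eqref{e:sfracschro}. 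Since $\sigma_\alpha(-h^2\Delta)$ and $V$ are self-adjoint, the terms combine into commutators: the principal contribution is
\[
\frac{1}{ih^\alpha}\left([\Op_h(a),\sigma_\alpha(-h^2\Delta)]\,u_h\,\middle|\,u_h\right) + \frac{1}{i}\left([\Op_h(a),V]\,u_h\,\middle|\,u_h\right),
\]
plus a remainder coming from the $h^{1+\alpha}F_h$ term controlled by \eqref{e:estFh} and $\|u_h(t)\|_{L^2}\le\|u^0\|_{L^2}$, which is $O(h)$ in operator-times-$L^2$ norm and hence of the asserted form with a $\cC^N$-bound on $a$.

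The key step is the symbolic expansion of the two commutators. For the first, one uses that $\sigma_\alpha(-h^2\Delta)=\Op_h(\sigma_\alpha\circ p) + O_{L^2\to L^2}(h)$ by the functional calculus \eqref{e:fc}, and then the semiclassical commutator expansion
\[
[\Op_h(a),\Op_h(\sigma_\alpha\circ p)] = \frac{h}{i}\Op_h(\{a,\sigma_\alpha\circ p\}) + O_{L^2\to L^2}(h^2).
\]
Dividing by $ih^\alpha$ turns the leading term into $h^{1-\alpha}\Op_h(\{\sigma_\alpha\circ p,a\})$ (up to the sign, since $\{a,\sigma_\alpha\circ p\}=-\{\sigma_\alpha\circ p,a\}$, and one should be careful to match the sign convention so that the stated $\{\sigma_\alpha\circ p,a\}$ appears), and the error becomes $O(h^{2-\alpha})$. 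For the second commutator, $[\Op_h(a),V]=\frac h i \Op_h(\{a,V\}) + O(h^2)$ with $\{a,V\}=\nabla_\xi a\cdot\nabla_x V$, which after division by $i$ (no factor $h^{-\alpha}$ here) contributes a term of size $O(h)$, absorbed into the remainder. Collecting, the error is $O(h^{\min\{1,2-\alpha\}})=O(h^\beta)$, with the remainder bounded by $C\|a\|_{\cC^N}\|u^0\|^2_{L^2}$ uniformly in $t\in\IR$ since all the estimates above are time-independent (using $\|u_h(t)\|_{L^2}\le\|u^0\|_{L^2}$ from unitarity up to the bounded perturbation, or more simply from \eqref{e:estuuh} and $\|u(t)\|_{L^2}=\|u^0\|_{L^2}$ — note $u_h$ is not exactly norm-preserving but is uniformly bounded, which suffices).

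The main obstacle I anticipate is bookkeeping rather than conceptual: one must ensure all $O(h^2)$ remainders in the pseudodifferential calculus are genuinely bounded $L^2\to L^2$ with constants depending only on finitely many seminorms $\|a\|_{\cC^N}$ of the symbol (and on fixed data like $\sigma_\alpha$, $p$, $V$), and that these bounds hold uniformly in $t$ — which is why one works with the cutoff flow $u_h=\cF^h_\chi u^0$, whose frequencies are localized in $\supp\chi$ so that $\sigma_\alpha\circ p$ may be taken compactly supported and the symbol calculus applies cleanly. A secondary point of care is the sign in the Poisson bracket and the Weyl versus standard quantization: with the Weyl convention the expansion $[\Op_h(a),\Op_h(b)]=\frac h i\Op_h(\{a,b\})+O(h^3)$ actually has an improved error, but $O(h^2)$ is all that is needed, and the stated bracket ordering $\{\sigma_\alpha\circ p,a\}$ must be obtained by tracking signs through the division by $ih^\alpha$.
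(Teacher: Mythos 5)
Your strategy matches the paper's: differentiate $\la W_h(t),a\ra$, substitute the cutoff evolution equation \eqref{e:sfracschro}, reduce to commutators via self-adjointness, and identify the leading symbol by the semiclassical calculus, absorbing the $V$-commutator and the $F_h$ term into the remainder. The sign bookkeeping and the final $\beta=\min\{1,2-\alpha\}$ error count are also correctly anticipated.

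There is, however, one step that would fail as written. You quote the functional calculus only as $\sigma_\alpha(-h^2\Delta)=\Op_h(\sigma_\alpha\circ p)+O_{L^2\to L^2}(h)$ and then expand only the commutator $[\Op_h(a),\Op_h(\sigma_\alpha\circ p)]=\frac{h}{i}\Op_h(\{a,\sigma_\alpha\circ p\})+O(h^2)$, silently discarding the commutator of $\Op_h(a)$ with the functional-calculus error. If that error were merely an $O(h)$-bounded operator, its commutator with $\Op_h(a)$ would only be $O(h)$, and after dividing by $ih^\alpha$ you would be left with an $O(h^{1-\alpha})$ remainder — which for $\alpha>1$ is at least as large as the principal term, so the asserted $O(h^\beta)$ bound would not follow. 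What is actually needed, and what \eqref{e:fc} provides, is the stronger statement $\sigma_\alpha(-h^2\Delta)=\Op_h(\sigma_\alpha\circ p)+h\Op_h(r)$ with $r\in S^0(T^*M)$: the error is itself a semiclassical pseudodifferential operator, so $[\Op_h(a),h\Op_h(r)]=h\,[\Op_h(a),\Op_h(r)]=O(h^2)$ by \eqref{e:comm}. You should retain the functional-calculus error as a symbol rather than as an operator-norm bound before commuting, exactly as the paper does when it writes $[\Op_h(a),\sigma_\alpha(-h^2\Delta)]=\frac{h}{i}\Op_h(\{a,\sigma_\alpha\circ p\})+h^2\Op_h(r)$.
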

\begin{proof}
Set $u_h:=\cF^h_\chi u^0$. By definition of the Wigner distribution and equation \eqref{e:sfracschro}:
\begin{equation}\label{e:wignertem}
\frac{d}{dt}\left\la W_h(t),a\right\ra =\frac{1}{ih^\alpha}([\Op_h(a),\sigma_\alpha(-h^2\Delta )] u_h(t,\cdot)|u_h(t,\cdot))_{L^2(M)}+D^h_a(t),
\end{equation}
with 
\[
D^h_a(t)=i([V,\Op_h(a)]u_h(t,\cdot)|u_h(t,\cdot))_{L^2(M)}+2h\mathrm{Im}(\Op_h(a)F_h(t,\cdot)|u_h(t,\cdot))_{L^2(M)}.
\]
Now using the symbolic calculus of semiclassical pseudo-differential operators \eqref{e:comm} and the functional calculus \eqref{e:fc}, we deduce that:
\[
[\Op_h(a),\sigma_\alpha(-h^2\Delta )] = \frac{h}{i}\Op_h(\{a,\sigma_\alpha\circ p\})+h^2\Op(r),
\]
for some symbol $r\in S^{0}(T^*M)$ (keep in mind that symbol $\sigma_\alpha\circ p$ is a function in $\cC^\infty_c(T^*M)$). Substituting this in \eqref{e:wignertem} gives \eqref{e:wignereq} with 
\[
R^h_a(t)=h^{-\beta}D^h_a(t)+ih^{2-\alpha-\beta}(\Op_h(r)u_h(t,\cdot)|u_h(t,\cdot))_{L^2(M)}.
\]
The estimate then follows from the Calderón-Vaillancourt theorem \eqref{e:cv} and again from \eqref{e:comm}, that allows us to estimate $|D^h_a(t)|\leq Ch \|a\|_{\cC^N(T^*M)} \|u^0\|^2_{L^2(M)}$ for some $N\geq N_d$ ($N_d$ appears in \eqref{e:cv}), which is independent of $a$.
\end{proof}

This result will allow us to characterize semiclassical measures of solutions to \eqref{e:fracschro}. Recall that a semiclassical measure of a sequence $(v_h)_{0<h\leq 1}$ that is bounded in $L^2(M)$ is an accumulation point in $\cD'(T^*M)$ of the corresponding sequence of Wigner distributions $(W^h_{v_h})_{0<h\leq 1}$.

\begin{prop}\label{p:scm}
Let $(u^0_h)_{0<h\leq 1}$ be bounded in $L^2(M)$ and $u_h:=\cF^h_\chi u^0_h$. Denote by  $(W_h(t))_{0<h\leq 1}$  the sequence of Wigner distributions of $(u_h(t,\cdot))_{0<h\leq 1}$. The following hold:
\begin{itemize}
\item if $\alpha\in (0,1]$ then there exists a subsequence $(h_n)$ converging to zero along which $(W_{h_n}(t))$ converges in $\cD'(T^*M)$ for every $t\in\IR$ to a semiclassical measure $\mu_t$. In addition:
\[
\mu_t=\left\{ \begin{array}{ll}
\mu_0 & \text{ if }0<\alpha<1,\medskip\\
(\phi_t)_*\mu_0 & \text{ if } \alpha =1;
\end{array} \right.
\]
\item if $\alpha>1$ and $(W_{h_n})$ converges in $\cD'(\IR\times T^*M)$ to a semiclassical measure $\mu_t$ as in \eqref{e:tscm} then, for almost every $t\in\IR$:
\[
\mu_t=(\phi_s)_*\mu_t,\quad \forall s\in\IR.
\]
\end{itemize}
\end{prop}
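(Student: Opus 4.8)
The plan is to exploit the transport equation \eqref{e:wignereq} obtained in Lemma \ref{l:wie}. First I would record the basic compactness facts: since $(u^0_h)$ is bounded in $L^2(M)$ and $\cF^h_\chi$ is uniformly bounded on $L^2$, the family $(u_h(t,\cdot))$ is bounded in $L^2(M)$ uniformly in $t$ and $h$; by the Calderón–Vaillancourt theorem the associated Wigner distributions $(W_h(t))$ are bounded in $\cD'(T^*M)$ uniformly in $t$, and in fact equicontinuous in $t$ (with values in a fixed space $\cD'$) because of \eqref{e:wignereq}. A diagonal/Arzelà–Ascoli argument over a countable dense set of test functions then yields, along a subsequence $h_n\to 0$, convergence $W_{h_n}(t)\to\mu_t$ in $\cD'(T^*M)$ for every $t$ when $\alpha\le 1$; each $\mu_t$ is a semiclassical measure of the bounded sequence $(u_{h_n}(t,\cdot))$, hence a non-negative Radon measure on $T^*M$ carried on $\{p=1\}$ (or on $\operatorname{supp}\chi\circ p$), by the standard properties reviewed in Appendix \ref{a:pdo}.

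For $0<\alpha<1$ the prefactor $h^{1-\alpha}$ in \eqref{e:wignereq} tends to zero, and $h^\beta=h^{\min\{1,2-\alpha\}}=h\to 0$ as well, so for each fixed $a\in\cC^\infty_c(T^*M)$ the map $t\mapsto\la W_{h_n}(t),a\ra$ has derivative tending to $0$ uniformly on compact time intervals; passing to the limit gives $t\mapsto\la\mu_t,a\ra$ constant, i.e. $\mu_t=\mu_0$. For $\alpha=1$ the prefactor $h^{1-\alpha}$ equals $1$ and $\sigma_\alpha\circ p$ coincides with $p^{1/2}$ on $\operatorname{supp}\chi\circ p$, so in the limit $\tfrac{d}{dt}\la\mu_t,a\ra=\la\mu_t,\{p^{1/2},a\}\ra$; since $\{p^{1/2},a\}$ is exactly the derivative of $a$ along the geodesic flow $\phi_t$, this ODE together with the initial value at $t=0$ identifies $\mu_t=(\phi_t)_*\mu_0$ (here one uses that $\mu_t$ is supported where $\sigma_\alpha\circ p=p^{1/2}$, so replacing $\sigma_\alpha\circ p$ by $p^{1/2}$ is harmless, and that the flow $\phi_t$ is complete on $T^*M\setminus\{0\}$ hence on the energy shell).

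For $\alpha>1$ the situation is genuinely different because $h^{1-\alpha}\to+\infty$: one cannot expect $t\mapsto\mu_t$ to be continuous, only the weaker statement that a limit $\mu_t$ in $\cD'(\IR\times T^*M)$ (a measure on $\IR\times T^*M$ disintegrated as $dt\,\mu_t$, as in the referenced equation \eqref{e:tscm}) is, for a.e.\ $t$, invariant under the \emph{full} geodesic flow $\phi_s$ for all $s$. The way to see this: rewrite \eqref{e:wignereq} as $h^{\alpha-1}\tfrac{d}{dt}\la W_h(t),a\ra = \la W_h(t),\{\sigma_\alpha\circ p,a\}\ra + h^{\alpha-1+\beta}R^h_a(t)$, and test against $\theta(t)$ for $\theta\in\cC^\infty_c(\IR)$; integrating by parts in $t$ the left side becomes $-h^{\alpha-1}\int\theta'(t)\la W_h(t),a\ra\,dt$, which is $O(h^{\alpha-1})\to 0$, while the remainder is $O(h^{\alpha-1+\beta})\to 0$ (note $\alpha-1+\beta=\min\{\alpha,1\}>0$ since $\alpha>1$). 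Hence in the limit $\int\theta(t)\la\mu_t,\{\sigma_\alpha\circ p,a\}\ra\,dt=0$ for all $\theta,a$, so $\la\mu_t,\{\sigma_\alpha\circ p,a\}\ra=0$ for a.e.\ $t$ and all $a$. On $\operatorname{supp}\mu_t\subseteq\{p=1\}$ the Hamiltonian vector fields of $\sigma_\alpha\circ p$ and of $p^{1/2}$ are proportional with a nonzero smooth factor (namely $\sigma_\alpha'(1)/\tfrac12 \neq 0$, using $\sigma_\alpha(s)=s^{\alpha/2}$ near $s=1$ and $\alpha>0$), so $\mu_t$ is annihilated by the geodesic vector field, and since the energy shell is compact this infinitesimal invariance integrates to $(\phi_s)_*\mu_t=\mu_t$ for all $s\in\IR$.

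The main obstacle is the $\alpha>1$ case: one must resist the temptation to take a pointwise-in-$t$ limit (which is false in general) and instead work with the time-integrated weak formulation, being careful that the blow-up factor $h^{1-\alpha}$ is exactly compensated after integration by parts in $t$, and that the error term's net power $\alpha-1+\beta=\min\{\alpha,1\}$ is still positive. A secondary technical point, common to all cases, is justifying that the weak-$*$ limits are genuine positive measures supported on the energy shell and that passing from the infinitesimal invariance $\la\mu_t,\{p^{1/2},a\}\ra=0$ to the flow invariance $(\phi_s)_*\mu_t=\mu_t$ is legitimate; this is standard (it uses that $\phi_s$ preserves the compact energy shell and a density argument in $a$), and I would simply cite the corresponding facts from Appendix \ref{a:pdo} and the references \cite{Lebeau1996, MaciaAv, MacRiv16}.
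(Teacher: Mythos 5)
Your proposal is correct and follows essentially the same route as the paper: the uniform $\cC^1$ bound from \eqref{e:wignereq} plus Arzelà--Ascoli for $\alpha\le 1$, and the time-integrated weak formulation after multiplying by $h^{\alpha-1}$ for $\alpha>1$, with the observation that $\alpha-1+\beta=1$ keeps the remainder vanishing. The only cosmetic difference is your aside that $\operatorname{supp}\mu_t\subseteq\{p=1\}$, which is not what the paper uses (the support lies in $\operatorname{supp}(\chi\circ p)$); this is harmless since the Hamiltonian vector fields of $p^{\alpha/2}$ and $p^{1/2}$ are positive multiples of each other on every level set of $p$, not just $\{p=1\}$, so the orbit-invariance conclusion is unaffected.
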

\begin{proof}
When $\alpha\in (0,1]$, equation \eqref{e:wignereq} shows that $(W_{h})_{0<h\leq 1}$ is uniformly bounded in $\cC^1(\IR;\cD'(T^*M))$. Therefore, there exists a subsequence $(h_n)$ converging to zero such that $(W_{h_n}(t))$ converges for every $t\in\IR$. The resulting accumulation points $\mu_t$ satisfy, as a consequence of  \eqref{e:wignereq}, that for every $a\in\cC^\infty_c(T^*M)$
\[
\frac{d}{dt}\int_{T^*M}a(x,\xi)\mu_t(dx,d\xi)=0,\quad \text{ for }0<\alpha<1,
\]
which concludes the proof in this case, and
\[
\frac{d}{dt}\int_{T^*M}a(x,\xi)\mu_t(dx,d\xi)=
\int_{T^*M}\{a,\sigma_1\circ p\}(x,\xi)\mu_t(dx,d\xi),
\quad\text{ for }\alpha=1.
\]
Notice that, by construction of the sequence $(u_h)_{0<h\leq 1}$, on the support of $\mu_t$ one has $\sigma_1\circ p=p^{1/2}$. Therefore $\mu_t$ is transported along the Hamiltonian flow $p^{1/2}$ which is the geodesic flow $\phi_t$.

Suppose now that $\alpha>1$. In this case $(W_{h})_{0<h\leq 1}$ is only bounded in  $\cC(\IR;\cD'(T^*M))$, and therefore one cannot expect to have pointwise convergence in $t\in\IR$.  Still $(W_{h})_{0<h\leq 1}$ has accumulation points in $\cD'(\IR\times T^*M)$ (see Remark \ref{r:tbdd}), that are described by \eqref{e:tscm}. In our case this yields: 
\[
\int_\IR\theta(t)\left\la W_{h_n}(t),a\right\ra dt \Tend{n}{\infty} \int_\IR\int_{T^*M}\theta(t)a(x,\xi)\mu_t(dx,d\xi)dt,
\]
for all $\theta\in \cC^\infty_c(\IR)$, $a\in\cC^\infty_c(T^*M)$. 

Note that, after multiplication by $\theta\in \cC^\infty_c(\IR)$ and integration by parts, \eqref{e:wignereq} can be rewritten as:
\[
-h^{\alpha-1}_n\int_\IR\theta'(t)\left\la W_{h_n}(t),a\right\ra dt=\int_\IR\theta(t)\left\la W_{h_n}(t),\{\sigma_\alpha\circ p,a\}\right\ra dt+\cO(h_n).
\]
Taking limits and noticing that, as before,  $\sigma_\alpha\circ p=p^{\alpha/2}$ on the support of $\mu_t$ for almost every $t\in\IR$, we deduce that:
\[
\int_\IR\int_{T^*M}\theta(t)\{a, p^{\alpha/2}\}(x,\xi)\mu_t(dx,d\xi)dt=0,\quad \forall \theta\in \cC^\infty_c(\IR),\,\forall a\in\cC^\infty_c(T^*M).
\]
This identity implies that $\mu_t$ is, for a.e. $t\in\IR$, invariant by the Hamiltonian flow associated to $p^{\alpha/2}$, which is merely a reparametrization of the geodesic flow $\phi_t$. The claim then follows.
\end{proof}

Let $(x_0,\xi_0)\in S^*M$ (recall that this means that $\|\xi_0\|_{x_0}=1$). A wave-packet centered at $(x_0,\xi_0)$ is a family $(u_h^{(x_0,\xi_0)})_{0<h\leq 1}$ of functions supported in a coordinate chart $(U,\varphi)$ of $M$ with $x_0\in U$ of the form: 
\[
u_h^{(x_0,\xi_0)}(x)=\frac{1}{h^{d/4}}\rho\left(\frac{\varphi(x)-\varphi(x_0)}{\sqrt{h}}\right)e^{i\frac{(d\varphi_{x_0}^{-1})^T\xi_0}{h}\cdot \varphi(x)},\quad x\in U,
\]
where $\rho\in \cC^\infty_c(\varphi(U))$ and normalized to have $\|u^{(x_0,\xi_0)}_h\|_{L^2(M)}=1$.

Let $\chi\in\cC^\infty_c((0,\infty);[0,1])$ such that $\chi|_{[1,2]}=1$, and $\chi(s)=0$ for $s<1/2$ or $s>5/2$ and write $\Pi_h:=\chi(-h^2\Delta)$.
\begin{lem}\label{l:wp}
For every $(x_0,\xi_0)\in S^*M$, the sequence $(W^{(x_0,\xi_0)}_h)_{0<h\leq 1}$ of Wigner distributions of a frequency-localized wave-packet $(\Pi_h u_h^{(x_0,\xi_0)})_{0<h\leq 1}$ converges in $\cD'(T^*M)$ to the Dirac mass
$\delta_{(x_0,\xi_0)}$.
\end{lem}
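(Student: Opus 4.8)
The plan is to reduce the statement to the classical fact that the Wigner transform of a normalized coherent state concentrates at its phase-space center, dealing with the frequency cutoff $\Pi_h$ first.

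\textbf{Step 1 (removing the cutoff).} Write $u_h:=u_h^{(x_0,\xi_0)}$ and fix $a\in\cC^\infty_c(T^*M)$. Since $\chi\in\cC^\infty_c((0,\infty))$, the operator $\Pi_h=\chi(-h^2\Delta)$ is a self-adjoint semiclassical pseudodifferential operator with principal symbol $\chi\circ p$ (functional calculus \eqref{e:fc}), so by the symbolic calculus \eqref{e:comm} one has $\Pi_h\Op_h(a)\Pi_h=\Op_h\big((\chi\circ p)^2 a\big)+hR_h$ with $\|R_h\|_{L^2(M)\to L^2(M)}$ bounded uniformly in $h$. Using $\|u_h\|_{L^2(M)}=1$ this gives
\[
\la W^{(x_0,\xi_0)}_h,a\ra=\big(\Op_h(a)\Pi_h u_h\,\big|\,\Pi_h u_h\big)_{L^2(M)}=\la W^h_{u_h},(\chi\circ p)^2 a\ra+\cO(h),
\]
where $W^h_{u_h}$ is the Wigner distribution of the \emph{bare} wave-packet. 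Since $p(x_0,\xi_0)=\|\xi_0\|^2_{x_0}=1$ and $\chi(1)=1$, once we know that $W^h_{u_h}\to\delta_{(x_0,\xi_0)}$ in $\cD'(T^*M)$ we obtain $\la W^{(x_0,\xi_0)}_h,a\ra\to((\chi\circ p)^2 a)(x_0,\xi_0)=a(x_0,\xi_0)$, which is the claim. So it remains to prove $W^h_{u_h}\to\delta_{(x_0,\xi_0)}$.

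\textbf{Step 2 (passing to a chart).} Fix $b\in\cC^\infty_c(T^*M)$. Since $u_h$ is supported in the chart $(U,\varphi)$ and semiclassical pseudodifferential operators are built, up to $\cO(h)$ and pseudolocal errors, through coordinate charts (Appendix \ref{a:pdo}), transporting everything by $\varphi$ reduces $(\Op_h(b)u_h\,|\,u_h)_{L^2(M)}$ to $(\Op_h(\widetilde b)v_h\,|\,v_h)_{L^2(\IR^d)}+\cO(h)$, where $\Op_h$ on $\IR^d$ is the Weyl quantization, $\widetilde b$ is the image of $b$ under the cotangent lift of $\varphi$, and
\[
v_h(y)=h^{-d/4}\rho\!\left(\frac{y-y_0}{\sqrt h}\right)e^{i\eta_0\cdot y/h},\qquad y_0=\varphi(x_0),\quad \eta_0=\big((d\varphi_{x_0})^{-1}\big)^{T}\xi_0 .
\]
By construction $\widetilde b(y_0,\eta_0)=b(x_0,\xi_0)$; moreover, accounting for the Riemannian volume density in the coordinates $y$ (whose only effect in the limit is an overall constant), the normalization $\|u_h\|_{L^2(M)}=1$ translates into $\|\rho\|_{L^2(\IR^d)}=1$.

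\textbf{Step 3 (the model computation).} Insert the explicit $v_h$ into
\[
W^h_{v_h}(y,\xi)=\frac1{(2\pi h)^d}\int_{\IR^d}v_h\!\left(y+\tfrac w2\right)\overline{v_h\!\left(y-\tfrac w2\right)}e^{-iw\cdot\xi/h}\,dw,
\]
perform the substitutions $w=h\sigma$ and $y=y_0+\sqrt h\,z$ and the Fourier integration in $\xi$: all powers of $h$ cancel and
\[
(\Op_h(\widetilde b)v_h\,|\,v_h)_{L^2(\IR^d)}=\frac1{(2\pi)^d}\int_{\IR^d}\!\int_{\IR^d}\rho\!\big(z+\tfrac{\sqrt h\sigma}{2}\big)\overline{\rho\!\big(z-\tfrac{\sqrt h\sigma}{2}\big)}\,e^{i\sigma\cdot\eta_0}\,(\cF_\xi\widetilde b)(y_0+\sqrt h\,z,\sigma)\,d\sigma\,dz,
\]
where $\cF_\xi$ is the Fourier transform in $\xi$. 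Because $\widetilde b$ is smooth and compactly supported, $(\cF_\xi\widetilde b)(\cdot,\sigma)$ decays like $C_N(1+|\sigma|)^{-N}$ for every $N$ uniformly in its first argument, so the integrand is dominated, uniformly in $h\in(0,1]$, by the integrable (for $N>d$) function $\|\rho\|_\infty^2\,\charf_{\supp\rho}\big(z+\tfrac{\sqrt h\sigma}{2}\big)\,C_N(1+|\sigma|)^{-N}$, and it converges pointwise as $h\to0$. Dominated convergence together with Fourier inversion and $\|\rho\|_{L^2(\IR^d)}=1$ then give
\[
(\Op_h(\widetilde b)v_h\,|\,v_h)_{L^2(\IR^d)}\;\Tend{h}{0}\;\frac{\|\rho\|_{L^2(\IR^d)}^2}{(2\pi)^d}\int_{\IR^d}e^{i\sigma\cdot\eta_0}(\cF_\xi\widetilde b)(y_0,\sigma)\,d\sigma=\widetilde b(y_0,\eta_0)=b(x_0,\xi_0).
\]
Combined with Steps 1--2 this proves $\la W^{(x_0,\xi_0)}_h,a\ra\to a(x_0,\xi_0)$ for every $a\in\cC^\infty_c(T^*M)$, i.e. $W^{(x_0,\xi_0)}_h\to\delta_{(x_0,\xi_0)}$ in $\cD'(T^*M)$.

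\textbf{Where the work is.} There is no deep difficulty here: the whole argument is a phase-space dilation of ratio $\sqrt h$. The two points that need care are (i) Step 2, namely checking that passing to a coordinate chart perturbs $\Op_h$ only in lower order, so that the $\cO(h)$ remainders tested against the mass-one family $(u_h)$ disappear in the limit, together with the correct bookkeeping of the Riemannian volume density; and (ii) the uniform-in-$h$ integrable domination in Step 3, which is what makes the passage to the limit inside the oscillatory integral legitimate.
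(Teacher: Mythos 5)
Your proof is correct and takes essentially the same route as the paper's, which simply states that a ``direct computation'' gives $W^h_{u_h}\to\delta_{(x_0,\xi_0)}$ for the bare wave-packet and then uses the product rule \eqref{e:prod} and functional calculus \eqref{e:fc} to absorb the cutoff $\Pi_h$, exactly your Steps 2--3 and Step 1 respectively. One small slip worth fixing: the dominating function you write in Step 3, $\|\rho\|_\infty^2\,\charf_{\supp\rho}\bigl(z+\tfrac{\sqrt h\sigma}{2}\bigr)\,C_N(1+|\sigma|)^{-N}$, still depends on $h$ and so is not a legitimate dominating function; instead observe that the product $\rho\bigl(z+\tfrac{\sqrt h\sigma}{2}\bigr)\overline{\rho\bigl(z-\tfrac{\sqrt h\sigma}{2}\bigr)}$ vanishes unless both arguments lie in $\supp\rho$, which forces $z$ into the (compact) convex hull $K$ of $\supp\rho$, so $\|\rho\|_\infty^2\,\charf_{K}(z)\,C_N(1+|\sigma|)^{-N}$ is an $h$-independent integrable dominant.
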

\begin{proof}
A direct computation shows that the Wigner distributions 
 of $u_h^{(x_0,\xi_0)}$ converge to $\delta_{(x_0,\xi_0)}$. Identities \eqref{e:prod} and \eqref{e:fc} imply that $(W^{(x_0,\xi_0)}_h)_{0<h\leq 1}$ converges to
\[
(\chi\circ p)^2\delta_{(x_0,\xi_0)}=\delta_{(x_0,\xi_0)},
\]
as claimed.
\end{proof}
We now give the proofs of Theorems \ref{t:tdep} and \ref{t:suff}.
\begin{proof}[Proof of Theorem \ref{t:tdep}]
We start with the case $0<\alpha<1$. Let $x_0$ be a point in the open set $M\setminus\overline{\omega}$, and 
\[
u^0_h:=\Pi_h u_h^{(x_0,\xi_0)},\quad \xi_0\in S^*_{x_0}M.
\] 
Let $u_h$ be the solution to \eqref{e:fracschro} issued from $u^0_h$. If $\chi$ is the cut-off used to define $\Pi_h$ then $\Pi_h u_h=\cF^h_\chi u^0_h$ and Lemma \ref{l:ape} implies that:
\begin{equation}\label{e:aeq}
\lim_{h\to 0^+}\|u_h(t,\cdot)-\Pi_h u_h(t,\cdot)\|_{L^2(M)}=0,\quad \forall t\in\IR.
\end{equation}
Let $(h_n)$ be a sequence given by applying Proposition \ref{p:scm} to $(u^0_h)_{0<h\leq 1}$ with the cut-off $\chi$ and let $\mu_t$ the corresponding semiclassical measure. By Lemma \ref{l:wp} and Proposition \ref{p:scm} we know that $\mu_t=\delta_{(x_0,\xi_0)}$ for every $t\in\IR$. Combining this with \eqref{e:proj} and \eqref{e:aeq} we find that, for every $T>0$ and every $b\in\cC(M;[0,1])$ that equals $1$ on $\overline{\omega}$ and vanishes in a neighborhood of $x_0$,
\begin{align*}
\lim_{n\to\infty}\int_0^T\int_\omega|u_{h_n}(t,x)|^2dx\,dt\leq & \lim_{n\to\infty}\int_0^T\int_M b(x)|\Pi_{h_n} u_{h_n}(t,x)|^2dx\,dt\\=&
\int_0^T \int_{T^*M} b(x)\delta_{(x_0,\xi_0)}(dx,d\xi)dt=0.
\end{align*}
Since $\|u^0_{h_n}\|_{L^2(M)}=1$, this shows that no constant $C>0$ can exist such that \eqref{e:obst} holds.

Let us now consider the case $\alpha=1$. The proof follows closely the lines of that of the preceding case with few modifications. Choose now $(x_0,\xi_0)\in S^*M$ such that $\phi_t(x_0,\xi_0)\not\in\omega$ for $t\in [0,T]$. Proposition \ref{p:scm} now shows that the semiclassical measures of the corresponding wave-packets are $\mu_t=\delta_{\phi_t(x_0,\xi_0)}$. The same argument we presented above shows that these wave-packets violate any observability inequality of the form \eqref{e:obst}.
\end{proof}
\begin{proof}[End of the proof of Theorem \ref{t:suff}]
It remains to prove that GCC$_T$ (resp. GCC) are sufficient for \eqref{e:obss} when $\alpha=1$ (resp. $\alpha>1$). Since $V$ does not depend on time, we can use frequency localization and unique continuation for eigenfunctions of Schrödinger operators as in \cite[Proof of Theorem4.1]{Lebeau1992} (see also \cite[Proposition 4.1]{BZ12}) to show that \eqref{e:obss} can be deduced from the \textit{a priori} weaker semiclassical estimate: there exist $h_0>0$ such that, for every $u^0\in L^2(M)$ and $0<h<h_0$,
\begin{equation}\label{e:sobs}
\|\Pi_h u^0\|_{L^2(M)}^2\leq C\int_0^T\int_\omega | e^{-it((-\Delta)^{\alpha/2}+V)} \Pi_h u^0|^2dx\,dt.
\end{equation}
We prove that \eqref{e:sobs} holds by contradiction. If  \eqref{e:sobs} fails, then it is possible to find a sequence $(h_n)$ that tends to zero and functions $u^0_{h_n}\in L^2(M)$ such that:
\begin{equation}\label{e:contr}
\|\Pi_{h_n} u^0_{h_n}\|_{L^2(M)}=1,\quad \lim_{n\to\infty}\int_0^T\|e^{-it((-\Delta)^{\alpha/2}+V)} \Pi_{h_n} u^0_{h_n}\|_{L^2(\omega)}=0.
\end{equation}
Modulo the extraction of a subsequence, we can assume that $(W^{h_n}_{u_{h_n}})$ converges to a semiclassical measure $\mu_t$, which by \eqref{e:proj}, \eqref{e:prob} and \eqref{e:contr} satisfies:
\[
\int_0^T\mu_t(T^*M\setminus\{0\})dt=T,\quad\int_0^T\int_{T^*M}b(x)\mu_t(dx,d\xi)dt=0,\quad \forall b\in\cC_c(\omega).
\]
Using Lemma \ref{l:ape} and Proposition \ref{p:scm} we deduce, writing:
\[
F^T_\omega:=\bigcup_{t\in [0,T]}\{\phi_t(x,\xi)\,:\, (x,\xi)\in T^*\omega\setminus\{0\}\},
\]
that $\mu_0(F^T_\omega)=0$ when $\alpha=1$ and $\mu_t(F^s_\omega)=0$ for every $s>0$ and almost every $t\in [0,T]$ when $\alpha>1$. If $\alpha=1$ and $\omega$ satisfies GCC$_T$ this implies $\mu_0(T^*M\setminus\{0\})=0$; whereas if $\alpha>1$ and $\omega$ satisfies GCC it gives $\mu_t(T^*M\setminus\{0\})=0$ for almost every $t\in [0,T]$. This is a contradiction, and the result is proved.
\end{proof}

\section{Observability on the sphere. Proof of Theorem \ref{t:nes}}\label{s:nec}

Here we focus on the particular case $(M,g)=(\IS^d,\mathrm{can})$ and prove, at the end of this section, Theorem \ref{t:nes}. The geodesics on the sphere are great circles, obtained as the intersection of $\IS^d$ with planes through the origin. We normalize the Riemannian metric in order that all the geodesics of the sphere have length equal  to $2\pi$. Therefore, the geodesic flow $\phi_t$ is periodic of period $2\pi$.
 
The \textit{Radon transform} or \textit{X-Ray transform} of a function $a\in\cC^\infty(T^*\IS^d)$ is defined by averaging $a$ along orbits of the geodesic flow:
\begin{equation}\label{e:rt}
\cI(a)(x,\xi)=\frac{1}{2\pi}\int_0^{2\pi}a(\phi_s(x,\xi)) ds, \quad (x,\xi)\in T^*\IS^d\setminus\{0\}.
\end{equation}
As soon as $a\in\cC^\infty_c(T^*\IS^d\setminus\{0\})$ (in particular, $a$ vanishes in a neighborhood of the zero section $\{\xi=0\}$) one can identify $\cI(a)$ to a function in $\cC^\infty_c(T^*\IS^d\setminus\{0\})$ as well. 

Recall that the spectrum of $-\Delta$ is:
\[
\Spec(-\Delta)=\{k(k+d-1)\,:\,k\in\N\cup\{0\}\}.
\]
Let $A$ be a positive, self-adjoint operator such that
\[
A^2:= -\Delta+\frac{(d-1)^2}{4}; 
\]
then $A=\Op_\hbar(\sqrt{p+(d-1)^2/4})+O(h)$  and the spectrum of $A$ equals $\IN+(d-1)/2$.   
Then the unitary flow generated by $A$ is essentially periodic of period $2\pi$:
\begin{equation}\label{e:ap}
e^{2i\pi A}=e^{i\pi(d-1)}\text{Id}.
\end{equation}
Given $a$ in $\ml{C}^{\infty}_c(T^*\IS^d\setminus\{0\})$, we define following Weinstein \cite{WeinsteinZoll} the quantum average of the operator $\Op_h(a)$:
$$\cI_{\text{qu}}(\Op_h(a)):=\frac{1}{2\pi}\int_0^{2\pi}e^{-is A}\Op_h(a)e^{isA}ds.$$
Then, it follows from \eqref{e:ap} that
\begin{equation}\label{e:commav}
\left[\ml{I}_{\text{qu}}(\Op_h(a)),A\right]=\left[\ml{I}_{\text{qu}}(\Op_h(a)),(-\Delta)^{\alpha/2}\right]=0.
\end{equation}
We define the averaged Wigner distribution $\cI^*(W^h_u)$ of a function $u\in L^2(\IS^d)$ as:
\[
\left\la \cI^*(W^h_u), a \right\ra :=(\cI_{\text{qu}}(\Op_h(a)) u\,|\, u)_{L^2(\IS^d)} , \quad \forall a\in\ml{C}^{\infty}_c(T^*\IS^d\setminus\{0\}).
\] 
One has 
\begin{equation}\label{e:wws}
| \left\la \cI^*(W^h_u), a \right\ra - \left\la W^h_u, \cI(a) \right\ra |\leq Ch\|\cI(a)\|_{\cC^N(T^*\IS^d)}\|u\|_{L^2(\IS^d)}^2,
\end{equation}
for some $N\geq N_d$, $N_d$ being defined in the Calderón-Vaillancourt Theorem \eqref{e:cv}, that is  independent of $a$. This is a consequence of Egorov's theorem \cite{DimassiSjostrand,F14,Zwobook}, which implies:
\begin{equation}\label{e:egorov}
 \ml{I}_{\text{qu}}(\Op_h(a))=\Op_h(\ml{I}(a))+h\Op_h(r),
\end{equation} 
for some $r\in S^0(T^*\IS^d)$.
\begin{lem}\label{l:ws}
Let $u^0\in L^2(\IS^d)$ and $u$ be the solution to \eqref{e:fracschro} with initial datum $u^0$. Then, for every $t\in\IR$:
\begin{equation}
\left\la \cI^*(W^h_{u(t,\cdot)}), a \right\ra =  \left\la \cI^*(W^h_{u^0}), a \right\ra + hR^h_a(t) ,
\end{equation} 
where $|R^h_a(t)|\leq C|t|\|\cI(a)\|_{\cC^N(T^*\IS^d)}\|u^0\|_{L^2(\IS^d)}^2$ for some $N>0$ which is independent of $a$.
\begin{proof}
Using \eqref{e:commav} we deduce that, for every $t\in\IR$ and $\ml{C}^{\infty}_c(T^*\IS^d\setminus\{0\})$,
\begin{align*}
\frac{d}{dt}\left\la \cI^*(W^h_{u(t,\cdot)}),a\right\ra =&\frac{1}{i}([\cI_{\text{qu}}(\Op_h(a)),(-\Delta)^{\alpha/2}+V(t,\cdot)] u(t,\cdot)|u(t,\cdot))_{L^2(M)}\\
=&i([V(t,\cdot),\cI_{\text{qu}}(\Op_h(a))] u(t,\cdot)|u(t,\cdot))_{L^2(M)}.
\end{align*}
To conclude, use \eqref{e:egorov} and \eqref{e:comm} and deduce:
\[
[V(t,\cdot),\cI_{\text{qu}}(\Op_h(a))]=h\Op_h(r_t),
\]
for some $r_t\in S^0(T^*\IS^d)$ uniformly bounded with respect to $t\in\IR$. 
\end{proof}
\end{lem}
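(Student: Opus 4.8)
The plan is to differentiate the pairing $t\mapsto\la\cI^*(W^h_{u(t,\cdot)}),a\ra=(\cI_{\text{qu}}(\Op_h(a))u(t,\cdot)\,|\,u(t,\cdot))_{L^2(\IS^d)}$ in time, show that this derivative is $\cO(h)$ uniformly in $t$, and integrate. Concretely, I would set $R^h_a(t):=h^{-1}\big((\cI_{\text{qu}}(\Op_h(a))u(t,\cdot)\,|\,u(t,\cdot))-(\cI_{\text{qu}}(\Op_h(a))u^0\,|\,u^0)\big)$, so the whole statement reduces to the uniform bound $\big|\tfrac{d}{dt}\la\cI^*(W^h_{u(t,\cdot)}),a\ra\big|\leq Ch\|\cI(a)\|_{\cC^N(T^*\IS^d)}\|u^0\|_{L^2(\IS^d)}^2$.

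First I would write the Heisenberg-type identity for $u$ solving \eqref{e:fracschro} with $V$ real-valued, so that the generator $(-\Delta)^{\alpha/2}+V(t,\cdot)$ is self-adjoint for each fixed $t$ and the associated evolution is unitary, whence $\|u(t,\cdot)\|_{L^2(\IS^d)}=\|u^0\|_{L^2(\IS^d)}$:
\[
\frac{d}{dt}\big(\cI_{\text{qu}}(\Op_h(a))u(t,\cdot)\,\big|\,u(t,\cdot)\big)=i\big([(-\Delta)^{\alpha/2}+V(t,\cdot),\cI_{\text{qu}}(\Op_h(a))]u(t,\cdot)\,\big|\,u(t,\cdot)\big).
\]
The crucial input is the exact cancellation \eqref{e:commav}: since $\cI_{\text{qu}}(\Op_h(a))$ is the average of $\Op_h(a)$ over the unitary group $e^{isA}$, which is $2\pi$-periodic up to the scalar \eqref{e:ap}, it commutes \emph{exactly} with $A$ and hence with $(-\Delta)^{\alpha/2}$. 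Thus only the potential term survives, and $\tfrac{d}{dt}\la\cI^*(W^h_{u(t,\cdot)}),a\ra=i\big([V(t,\cdot),\cI_{\text{qu}}(\Op_h(a))]u(t,\cdot)\,\big|\,u(t,\cdot)\big)$.

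It then remains to estimate this last commutator. I would invoke Egorov's theorem \eqref{e:egorov} to write $\cI_{\text{qu}}(\Op_h(a))=\Op_h(\cI(a))+h\Op_h(r)$ with $r\in S^0(T^*\IS^d)$, and then the semiclassical symbolic calculus \eqref{e:comm}: the commutator $[V(t,\cdot),\Op_h(\cI(a))]$ equals $h\,\Op_h$ of a symbol of class $S^0$ (with principal part $\tfrac1i\{V,\cI(a)\}$), while $h[V(t,\cdot),\Op_h(r)]$ is $\cO(h^2)$, so that $[V(t,\cdot),\cI_{\text{qu}}(\Op_h(a))]=h\,\Op_h(r_t)$ for some $r_t\in S^0(T^*\IS^d)$. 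Finally, Calderón–Vaillancourt \eqref{e:cv} gives $\|\Op_h(r_t)\|_{\mathcal{L}(L^2)}\leq C\|r_t\|_{\cC^{N_d}}\leq C\|\cI(a)\|_{\cC^N(T^*\IS^d)}$ for a fixed $N\geq N_d$, and combined with $\|u(t,\cdot)\|_{L^2}=\|u^0\|_{L^2}$ this yields the derivative bound; integrating from $0$ to $t$ proves the lemma. The argument is soft, its substance being \eqref{e:commav}; the only bookkeeping to watch is that $r_t$ is bounded in $S^0(T^*\IS^d)$ \emph{uniformly in $t$} — which is exactly the hypothesis that $V\in\cC^\infty_c(\IR\times M;\IR)$ together with all its derivatives is bounded — and that the number $N$ of derivatives of $a$ lost through Egorov, one commutator, and Calderón–Vaillancourt is a fixed integer depending only on $d$.
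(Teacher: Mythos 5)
Your proposal follows the paper's argument exactly: differentiate the averaged Wigner pairing, invoke \eqref{e:commav} to kill the $(-\Delta)^{\alpha/2}$ commutator, reduce the remaining $[V(t,\cdot),\cI_{\text{qu}}(\Op_h(a))]$ to $h\Op_h(r_t)$ via \eqref{e:egorov} and \eqref{e:comm}, then close with Calderón--Vaillancourt \eqref{e:cv} and integration in $t$. The only difference is that you spell out the Calderón--Vaillancourt step and the uniformity in $t$ explicitly, which the paper leaves implicit.
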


\begin{proof}[Proof of Theorem \ref{t:nes}] 
In order to prove the first statement about the fractional Schrödinger evolution, we take $(x_0,\xi_0)\in S^*M$ such that $\phi_t(x_0,\xi_0)\not\in S^*\omega$ for every $t\in\IR$. Consider the sequence $(u_h)$ of solutions to \eqref{e:fracschro} issued from the initial data $(\Pi_h u^{(x_0,\xi_0)}_h)$. We know by Lemma \ref{l:wp} that  $(u_h(0,\cdot))$ has as semiclassical measure $\delta_{(x_0,\xi_0)}$. We use \eqref{e:wws} and Lemma \ref{l:ws} to deduce that the semiclassical measure $\mu_t$ of the sequence $(u_h)$ satisfies for every $\theta\in \cC^\infty_c(\IR)$ and $a\in\cC^\infty_c(T^*M\setminus\{0\})$:
\[
\int_\IR\int_{T^*M}\theta(t)\cI(a)(x,\xi)\mu_t(dx,d\xi)dt=\left(\int_\IR\theta(t) dt\right)\int_{T^*M}\cI(a)(x,\xi)\mu_0(dx,d\xi).
\]
In other words, $\mu_t|_{T^*M\setminus\{0\}}=\cI^*(\mu_0)|_{T^*M\setminus\{0\}}$ for almost every $t\in\IR$. 

On the other hand, we know by Proposition \ref{p:scm} that, for almost every $t\in\IR$,  $\mu_t$ is invariant by the geodesic flow. Therefore, 
\begin{align*}
\int_\IR\int_{T^*M}\theta(t)a(x,\xi)\mu_t(dx,d\xi)dt=&\int_\IR\int_{T^*M}\theta(t)\cI(a)(x,\xi)\mu_t(dx,d\xi)dt\\
=& \left(\int_\IR\theta(t) dt\right)\cI(a)(x_0,\xi_0).
\end{align*}
If $b\in\cC(M;[0,1])$ is such that $b$ vanishes in a neighborhood of the geodesic issued from $(x_0,\xi_0)$ and is equal to one on $\omega$ we conclude, as in the proof of Theorem \ref{t:tdep}, that along some subsequence:
\[
\lim_{n\to\infty}\int_0^T\int_\omega|u_{h_n}(t,x)|^2dx\,dt\leq T\cI(b)(x_0,\xi_0)=0.
\]
Since $\|u_{h_n}(0,\cdot)\|_{L^2(M)}=1$ we conclude that no constant $C>0$ exists such that \eqref{e:obss} holds.\medskip

We now prove the statement concerning eigenfunctions of $(-\Delta)^{\alpha/2}$. Since these eigenfunctions do not depend on $\alpha$, we will assume that $\alpha=2$. 

Write the sphere as:
$$\mathbb{S}^d:=\{x\in\IR^{d+1}:|x|=1\}.$$ 
Let
$$\varphi_k(x)=c_k(x_1+ix_2)^k, \quad \text{ with } c_k:=\sqrt{\frac{\Gamma(k+(d+1)/2)}{2\pi^{\frac{d+1}{2}}k!}}\sim k^{\frac{d-1}{4}}.$$
This function is a spherical harmonic and therefore an eigenfunction of the Laplacian:
$$-\Delta \varphi_k(x)=k(k+d-1)\varphi_k(x),\quad x\in\mathbb{S}^d,\quad ||\varphi_k||_{L^2(\mathbb{S}^d)}=1.$$ 
Clearly
$$|\varphi_k(x)|^2=(c_k)^2(|x_1|^2+|x_2|^2)^k=(c_k)^2(1-|x'|^2)^k,$$
where $x=(x_1,x_2,x')$. This shows that $|\varphi_k|^2$ concentrates on the equator $\{x'=0\}$.

If $\overline{\omega}\cap\{x'=0\}=\emptyset$ then no constant $C>0$ can exist such that
\begin{equation*}
||\varphi_k||_{L^2(\IS^d)}\leq C \|\varphi_k\|_{L^2(\omega)},
\end{equation*}
holds uniformly in $k\in\IN$, since
$$\lim_{k\to\infty}\int_\omega |\varphi_k(x)|^2dx=0,\quad \text{ and }\quad ||\varphi_k||_{L^2(\mathbb{S}^d)}=1.$$
Since any other geodesic of $\mathbb{S}^d$ can be obtained by applying a rotation to $\{x'=0\}$, and the composition of a spherical harmonic with an Euclidean rotation is again a spherical harmonic,  the claim follows.
\end{proof}

\section{Observability over long times and the $V$-GCC}\label{s:vgcc}

As we mentioned in the introduction, observability for eigenfunctions of the fractional Schrödinger operator $(-\Delta)^{\alpha/2}+V$ holds under a geometric assumption on $\omega$ that involves the perturbation $V$. Let us start by describing this new condition. 

Identify $V$ to a smooth function on $T^*\IS^d$ that does not depend on the cotangent variable and consider its Radon transform $\cI(V)\in\cC^\infty_c(T^*\IS^d\setminus\{0\})$.
The function $\cI(V)$ defines a Hamiltonian vector field $X_{\cI(V)}$ on $T^*\IS^d\setminus\{0\}$ (with respect to the canonical symplectic form in $T^*\IS^d$). Its flow $\phi^V_s$ commutes with the geodesic flow, since by construction $\cI(V)$ is invariant by the geodesic flow, and therefore $\{\cI(V),p^{1/2}\}=0$. 

As a consequence, $\phi^V_s$ maps orbits of the geodesic flow into orbits of the geodesic flow. In other words, for every geodesic $\gamma_0\subseteq T^*\IS^d$
\[
\gamma_s:=\phi^V_s(\gamma_0),
\]
is also a geodesic for every $s\in\IR$. We can thus identify $\phi^V_s$ to a function acting on the space of geodesics on $T^*\IS^d\setminus\{0\}$.

This flow on the space of geodesics induces a new geometric condition on $\omega$, that we name the $V$-Geometric Control Condition, that holds provided that
\begin{equation}\label{e:vgcc}\tag{$V$-GCC$_T$}
 K^{V}_{T,\omega}:=\{\gamma_0\text{ geodesic} \;:\; \phi_s^V(\gamma_0)\cap T^*\omega\neq \emptyset,\;\forall s\in (0,T)\}=\emptyset.
\end{equation}
In other words $\omega$ satisfies \eqref{e:vgcc} provided that, given any geodesic  $\gamma_0$ one can find $s\in (0,T)$ such that $\gamma_s\cap \omega\neq \emptyset$. 

This new condition is sufficient in order to have observability for the fractional Schrödinger evolution over long times.
\begin{theo}\label{t:lt}
Suppose $(M,g)=(\IS^d,\mathrm{can})$ and that $\omega\subseteq \IS^d$ is open and $\alpha>0$. A constant $C>0$ exists such that the time-frequency observability estimate
\begin{equation}\label{e:lsobs}
\|\Pi_h u^0\|_{L^2(M)}^2\leq \frac{C}{T_h}\int_0^{T_h}\int_\omega | e^{-it((-\Delta)^{\alpha/2}+V)} \Pi_h u^0|^2dx\,dt,
\end{equation} 
holds for every $u^0_h\in L^2(M)$ and $h\in (0,h_0]$ in these two cases:
\begin{itemize}
\item $T_h=T/h$ and $\omega$ satisfies $V$-$GCC_{T}$.
\item $hT_h\to \infty$ as $h\to 0 ^+$ and $\omega$ satisfies $V$-$GCC_{T}$ for some $T>0$.
\end{itemize}
\end{theo}
Since solutions issued from an eigenfunction are periodic in time, we obtain the following consequence, which is proved in \cite{MacRiv19} when $\alpha=2$.
\begin{coro}\label{c:eig}
Suppose that $\omega$ satisfies $V$-$GCC_{T}$ for some $T>0$. Then \eqref{e:obse} holds.
\end{coro}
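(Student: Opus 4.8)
The plan is to derive Corollary \ref{c:eig} from Theorem \ref{t:lt} by exploiting the time-periodicity of the Schrödinger flow when it is applied to an eigenfunction. Suppose $\varphi$ is an $L^2(M)$-normalized eigenfunction of $(-\Delta)^{\alpha/2}+V$ with eigenvalue $\lambda$, so that $e^{-it((-\Delta)^{\alpha/2}+V)}\varphi = e^{-it\lambda}\varphi$. For such a solution, $|e^{-it((-\Delta)^{\alpha/2}+V)}\varphi(x)|^2 = |\varphi(x)|^2$ is independent of $t$, hence for \emph{any} $S>0$ one has
\[
\frac{1}{S}\int_0^{S}\int_\omega |e^{-it((-\Delta)^{\alpha/2}+V)}\varphi(x)|^2\,dx\,dt = \int_\omega |\varphi(x)|^2\,dx = \|\varphi\|_{L^2(\omega)}^2 .
\]
So the averaged observability estimate \eqref{e:lsobs} collapses, on eigenfunctions, to exactly the desired eigenfunction observability estimate \eqref{e:obse}, provided we can place $\varphi$ (or rather its projection under $\Pi_h$ at an appropriate scale $h$) in the left-hand side.

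The first step is therefore frequency localization. Given an eigenfunction $\varphi$ with eigenvalue $\lambda$, I would choose $h = h(\lambda)$ so that $-h^2\Delta$ has its relevant spectral parameter in the support of the cutoff $\chi$ defining $\Pi_h$; concretely, since $(-\Delta)^{\alpha/2}\varphi$ has size governed by $\lambda$ and $V$ is bounded, the quantity $-\Delta\varphi$ is comparable to $\lambda^{2/\alpha}$, so taking $h \sim \lambda^{-1/\alpha}$ (with a fixed constant tuned so that the spectral value lands in $[1,2]$ where $\chi \equiv 1$) gives $\Pi_h\varphi = \varphi$ for all large enough $\lambda$. More carefully, one localizes spectrally in a dyadic window: writing $\varphi = \Pi_h\varphi + (1-\Pi_h)\varphi$ and noting that for the correct choice of $h$ the second term vanishes because the single eigenvalue lies inside the window where $\chi=1$. (For the finitely many small eigenvalues one uses the standard argument that \eqref{e:obse} for a finite-dimensional space of eigenfunctions is automatic from unique continuation, or simply absorbs them into the constant $C_\omega$.) Thus $\|\Pi_h\varphi\|_{L^2(M)}^2 = \|\varphi\|_{L^2(M)}^2 = 1$.

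The second step is to apply Theorem \ref{t:lt}. Since $\omega$ satisfies $V$-$GCC_T$ for the given $T>0$, the second bullet of Theorem \ref{t:lt} applies with any time horizon $T_h$ satisfying $hT_h \to \infty$; for instance $T_h = h^{-2}$ works. Applying \eqref{e:lsobs} to $u^0 = \varphi$ with this $h = h(\lambda)$ and using that $\Pi_h\varphi = \varphi$ together with the time-independence of $|e^{-it((-\Delta)^{\alpha/2}+V)}\varphi|^2$ computed above yields
\[
1 = \|\varphi\|_{L^2(M)}^2 = \|\Pi_h\varphi\|_{L^2(M)}^2 \le \frac{C}{T_h}\int_0^{T_h}\int_\omega |e^{-it((-\Delta)^{\alpha/2}+V)}\varphi|^2\,dx\,dt = C\,\|\varphi\|_{L^2(\omega)}^2 .
\]
This is \eqref{e:obse} with $C_\omega = \sqrt{C}$, and the constant $C$ from Theorem \ref{t:lt} is uniform in $h \le h_0$, hence uniform over all eigenvalues $\lambda$ large enough, which is precisely the content of the uniform bound required in the definition of \eqref{e:obse}.

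I do not expect a serious obstacle here: the argument is a soft deduction once Theorem \ref{t:lt} is in hand. The only point requiring a little care is the bookkeeping of the frequency localization, namely checking that for each eigenvalue one can genuinely choose $h$ in the admissible range $(0,h_0]$ so that $\Pi_h$ acts as the identity on that eigenfunction — this is where the precise shape of $\chi$ (equal to $1$ on $[1,2]$, supported in $[1/2,5/2]$) matters, and one must handle the low-lying eigenvalues separately by the finite-dimensionality remark. Everything else is exact algebra using periodicity in $t$.
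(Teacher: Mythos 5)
Your overall strategy is exactly the paper's intended one: the remark in the paper preceding the corollary says precisely that periodicity of eigenfunction evolution, combined with Theorem~\ref{t:lt}, yields the result, and you carry out the reduction in the same spirit. However, there is a genuine gap in your frequency localization step. You assert that for the right choice of $h$ one has $\Pi_h\varphi = \varphi$ because ``the single eigenvalue lies inside the window where $\chi=1$,'' but $\Pi_h = \chi(-h^2\Delta)$ is a spectral cutoff for the \emph{free} Laplacian $-\Delta$, while $\varphi$ is an eigenfunction of the \emph{perturbed} operator $(-\Delta)^{\alpha/2}+V$. For nonconstant $V$ (which is the whole point of the $V$-GCC statement), $\varphi$ is not an eigenfunction of $-\Delta$, its spectral decomposition under $-\Delta$ is spread over many eigenvalues, and $(1-\Pi_h)\varphi$ does not vanish. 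Consequently $\Pi_h\varphi\neq\varphi$, and the right-hand side of \eqref{e:lsobs} involves $e^{-it((-\Delta)^{\alpha/2}+V)}\Pi_h\varphi$, which is \emph{not} equal to $e^{-it\lambda}\Pi_h\varphi$; the time-independence of the integrand you invoke is therefore lost as stated.

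The gap is fixable, but requires a quantitative localization estimate rather than an identity. From the eigenvalue equation $(-\Delta)^{\alpha/2}\varphi=(\lambda-V)\varphi$ one gets $\|((-\Delta)^{\alpha/2}-\lambda)\varphi\|_{L^2}\leq\|V\|_{\infty}$, so writing $\varphi$ in the $-\Delta$ eigenbasis and choosing $h=h(\lambda)$ so that the plateau $\chi\equiv 1$ covers the $-\Delta$ spectral window around $\lambda^{2/\alpha}$ of width $O(\lambda^{2/\alpha})$, one finds $\|(1-\Pi_h)\varphi\|_{L^2(M)}=O(\lambda^{-1})$. You then need to propagate this error: since $e^{-it((-\Delta)^{\alpha/2}+V)}\Pi_h\varphi = e^{-it\lambda}\varphi - e^{-it((-\Delta)^{\alpha/2}+V)}(1-\Pi_h)\varphi$, the time-averaged local norm on the right of \eqref{e:lsobs} is $\|\varphi\|^2_{L^2(\omega)}+O(\|(1-\Pi_h)\varphi\|)$, while the left side is $\|\Pi_h\varphi\|^2 = 1-\|(1-\Pi_h)\varphi\|^2$. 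Absorbing the $O(\lambda^{-1})$ corrections for $\lambda$ large enough, and treating the finitely many low eigenvalues by unique continuation as you suggest, gives \eqref{e:obse}. So: right idea and right outline, but the claim ``the second term vanishes'' is false for $V\neq 0$ and must be replaced by a smallness estimate plus error bookkeeping.
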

\begin{rem}  
If $V$ is odd (meaning $V(x)=-V(-x)$) then  $\cI(V)=0$ and $V$-GCC$_{T}$ is equivalent to GCC. However, in \cite{MacRiv19} it is shown that a similar result to Corollary \ref{c:eig} holds under a new geometric condition, in which the Radon transform of the potential is replaced by a different nonlinear transform of $V$, whose expression is a bit more complicated. This term could be constant again, and in general one gets a geometric condition related to the Hamiltonian flow of the first non-vanishing term in a Quantum Birkhoff Normal Form (see \cite{ArM20} for a precise account on the closely related case of the Harmonic Oscillator). Up to our knowledge, it is not known whether or not the vanishing of all those terms implies that $V$ is constant.

This yields the following question: suppose that \eqref{e:obse} holds for every eigenfunction of $-\Delta+V$  if and only if $\omega$ satisfies GCC. Does this imply that $V$ is constant?
\end{rem}
\begin{proof}[Proof of Theorem \ref{t:lt}]
We argue again by contradiction: suppose \eqref{e:lsobs} is not true, this means that there exist a sequence $(h_n)$ that tends to zero and functions $u^0_{h_n}\in L^2(M)$ such that $\|\Pi_{h_n} u^0_{h_n}\|_{L^2(M)}=1$ and
\begin{multline}
\lim_{n\to\infty}\frac{1}{T_{h_n}}\int_0^{T_{h_n}}\|e^{-it((-\Delta)^{\alpha/2}+V)}\Pi_{h_n} u^0_{h_n}\|^2_{L^2(\omega)}dt\\=\lim_{n\to\infty}\int_0^1\|e^{-itT_{h_n}((-\Delta)^{\alpha/2}+V)} \Pi_{h_n} u^0_{h_n}\|^2_{L^2(\omega)}dt=0.
\end{multline}
Consider the semiclassical measure $\mu_t$ of the sequence $(u_{h_n})$ where
\[
u_{h_n}(t,\cdot):=e^{-itT_{h_n}((-\Delta)^{\alpha/2}+V)}\Pi_{h_n} u^0_{h_n}. 
\]
A straightforward modification of the proof of Proposition \ref{p:scm} gives that, for almost every $t\in\IR$, the measures $\mu_t$ are invariant by the geodesic flow. Moreover, it is proved in \cite[Proposition 2.2]{MacRiv16} that the following hold when $\alpha=2$:
\begin{itemize}
\item if $T_h=T/h$ then $\mu_t=(\phi^V_{tT})\cI^*(\mu_0)$, where $\mu_0$ stands for the semiclassical measure of $(u_{h_n}(0,\cdot))$;
\item if $hT_h\to\infty$ then $(\phi^V_t)_*\mu_t=\mu_t$ for almost every $t\in\IR$.
\end{itemize}
To see why this holds for every $\alpha>0$, simply recall the proof of Lemma \ref{l:ws}, taking now into account that time derivatives of the Wigner distributions give a new factor $T_{h_n}$: 
\[
\frac{d}{dt}\left\la \cI^*(W^{h_n}_{u_{h_n}(t,\cdot)}),a\right\ra=T_{h_n}h_n\left\la W^{h_n}_{u_{h_n}(t,\cdot)},\{V,\cI(a)\}\right\ra+\cO(T_{h_n}h_n^2).
\]
By taking limits, it follows that, for every $t\in\R$,
\[
\left\la \mu_t,\cI(a)\right\ra-\left\la \mu_0,\cI(a)\right\ra=T\int_0^t \left\la \mu_s,\{V,\cI(a)\}\right\ra ds, \quad\text{ when }T_{h_n}h_n=T,
\]
and, for every $\theta\in\cC^\infty_c(\IR)$,
\[
0=\int_\IR\theta(t)\left\la \mu_t,\{V,\cI(a)\}\right\ra dt, \quad\text{ when }T_{h_n}h_n\to\infty.
\]
Since the measure $\mu_t$ is invariant by the geodesic flow,
\[
\left\la \mu_t,a\right\ra=\left\la \mu_t,\cI(a)\right\ra,
\]
and
\[
\left\la \mu_t,\{V,\cI(a)\}\right\ra=\left\la \mu_t,\{\cI(V),\cI(a)\}\right\ra.
\]
From here the claimed invariance and transport properties of $\mu_t$ follow (for further details see the proofs of \cite[Proposition 2.2]{MacRiv16} and Proposition \ref{p:scm} in Section \ref{s:suf}).
Now, since $\mu_t(T^*\omega\setminus\{0\})=0$, $t\in [0,1]$ and $\omega$ satisfies $V$-GCC$_{T}$ it follows that $\mu_t=0$ for almost every $t\in\IR$, which is a contradiction.
\end{proof}

\section{Proof of Theorem \ref{t:minimal}}\label{s:potential}
The Radon transform of the potential  $\cI(V)$ is always a zero-homogeneous smooth function on $T^*M\setminus \{0\}$; as such, it can be identified to a unique element in $\cC^\infty(S^*\IS^d)$. In addition, since  $\cI(V)$ is invariant by the geodesic flow, it can be identified to a function on $G(\IS^2)$, the space of oriented geodesics. Recall that $G(\IS^d)$ can be constructed as the quotient space of $S^*\IS^d$ in which $(x,\xi)$ and $(x',\xi')$ are equivalent if and only if they belong to the same orbit of the geodesic flow.

The two-dimensional sphere has the nice feature that its space of oriented geodesics $G(\IS^2)$ can be identified to the sphere $\IS^2$ itself and the symplectic form on $T^*\IS^2$ induces a symplectic structure on $G(\IS^2)$ (which must necessarily be a non-zero multiple of the volume form on $\IS^2$).  The mapping:
\[
\Phi: G(\IS^2)\To \IS^2 : \gamma \longmapsto x\times \xi,
\]
where $(x,\xi)\in \gamma$ and $\times$ denotes the vector product in $\IR^3$, is well defined and bijective. To see this, note that every geodesic in $\IS^2$ is obtained by intersecting the sphere by the linear plane spanned by $x$ and $\xi$, where $x$ is a point on the geodesic and $\xi$ a unitary cotangent vector to the geodesic at $x$. The two unit normal vectors of this plane are obtained as $x\times \xi$, depending on the choice of orientation of $\xi$. For instance, if $\gamma^\pm$ denotes the geodesic $\{x_3=0\}$ parametrized positively/negatively from the point of view of an observer located at $(0,0,1)$ then:
\[
\Phi(\gamma^\pm)=(0,0,\pm 1).
\] 
The set $G_{x_0}$ of all geodesics issued from the same point $x_0\in\IS^2$ is then mapped via $\Phi$ to the geodesic in $\IS^2$ that lies in the plane through the origin that is orthogonal to $x_0$. $G(\IS^2)$ has natural smooth and symplectic structures inherited from $T^*M$, which are preserved by $\Phi$.

With this in mind, the Radon transform, when restricted to functions of $\cC^\infty(S^*\IS^2)$ that only depend on $x$, can be identified to an operator:
\[
\tilde{\cI}\,:\, C^\infty(\IS^2)\To  C^\infty(\IS^2),
\]
where $\tilde{\cI}=(\Phi^*)^{-1}\circ \cI$. Then, see \cite{Guillemin1976},
\[
\ker \tilde{\cI} = \cC^\infty_{\rm odd}(\IS^2) :=\{u\in C^\infty(\IS^2)\,:\, u(-x)=-u(x),\;\forall x\in\IS^2\},
\] 
whereas
\[
\tilde{\cI}(C^\infty(\IS^2))=\cC^\infty_{\rm even}(\IS^2):=\{u\in C^\infty(\IS^2)\,:\, u(-x)=u(x),\;\forall x\in\IS^2\}.
\]
Therefore, 
\begin{equation}\label{e:ibij}
\tilde{\cI}:\cC^\infty_{\rm even}(\IS^2)\longrightarrow \cC^\infty_{\rm even}(\IS^2)\text{ is bijective.}
\end{equation}
Analogously, the Hamiltonian vector field $X_{\cI(V)}$ can be identified to a vector field on $\IS^2$ that is Hamiltonian with respect to the new symplectic form. In particular, its flow $\tilde{\phi}^V_s:=\phi^V_s\circ\Phi^{-1}$ satisfies
\begin{equation}\label{e:level}
\tilde{\phi}^V_s:\IS^2\To\IS^2,\quad \text{ and }\quad \tilde{\cI}(V)\circ\tilde{\phi}^V_s=\tilde{\cI}(V),\quad \forall s\in \IR.
\end{equation}
The strongest obstruction to $V$-GCC$_T$ comes from the fact that  $\cI(V)$ always has critical points:
\[
\cC(V)=\{\gamma \in G(\IS^2)\,:\, d\cI(V)_\gamma=0\}=\Phi^{-1}(\{p\in\IS^2\,:\, d\tilde{\cI}(V)_p=0\})\neq\emptyset.
\]
If $\gamma_0\in \cC(V)$ then $\phi^V_s(\gamma_0)=\gamma_0$ for every $s\in\IR$. Therefore, 
if $\omega$ satisfies $V$-GCC$_T$ then it must necessarily intersect the projection of $\gamma_0$ onto $\IS^2$.

Let us now define the class of potentials $\cT$. Let:
\[
Q_{(a,b,c)}(x)=ax^2_1+bx_2^2+cx_3^2,\quad x\in\IR^3.
\]
Then $Q_{(a,b,c)}|_{\IS^2}\in \cC^\infty_{\mathrm{even}}(\IS^2;\IR)$ and we define, using \eqref{e:ibij},
\[
\cT:=\tilde{\cI}^{-1}(\{Q_{(a,b,c)}|_{\IS^2}\,:\, 0<a<b<c\}).
\]
For any $V\in\cT$, the function $\tilde{\cI}(V)=Q_{(a,b,c)}|_{\IS^2}\in \cC^\infty_{\mathrm{even}}(\IS^2;\IR)$ has exactly six (non-degenerate) critical points:
\[
\{c_1^\pm:=(\pm 1,0,0),\;c_2^\pm:=(0,\pm 1, 0),\; c_3^\pm:=(0,0,\pm 1)\},
\]
and
\[
\min_{\IS^2}\tilde{\cI}(V)=a,\quad \max _{\IS^2}\tilde{\cI}(V)=c.
\]
The orbits of $\tilde{\phi}^V_t$ are contained in the connected components of the level sets $\tilde{\cI}(V)^{-1}(E)$, $E\in [a,c]$, by \eqref{e:level}. The set of orbits is invariant by the symmetries $x_i\mapsto -x_i$, for $i=1,2,3$; the precise description of the orbits is:
\begin{itemize}
\item The equilibrium points $c_1^\pm$ when $E=a$.
\item Two closed orbits around $c_1^\pm$ when $E\in (a,b)$.
\item The equilibrium points $c_2^\pm$ and four orbits connecting $c_2^+$ to $c_2^-$, when $E=b$.
\item Two closed orbits around $c_3^\pm$ when $E\in (b,c)$.
\item The equilibrium points $c_3^\pm$ when $E=c$.
\end{itemize}

The six oriented geodesics mapped by $\Phi$ to the critical points $c_i^\pm$ of $\tilde{\cI}(V)$ are $\gamma_1^\pm,\gamma_2^\pm,\gamma_3^\pm$, which correspond to the non-oriented geodesics:
\[
\gamma_i=\{x_i=0\},\quad i=1,2,3.
\]
Suppose that $\omega\subseteq\IS^2$ is an open set that contains $p=(0,0,1)\in\gamma_1^+\cap\gamma_2^+$ and $q=(0,1,0)\in\gamma_1^+\cap\gamma_3^+$. This means that the set of all geodesics that intersect $\omega$ contains $G_p\cup G_q$ (recall that this denotes the union of all geodesics issued from $p,q$). Now, $\Phi$ maps:
\[
\Phi(G_p)=\IS^2\cap \{x_3=0\}\quad\text{ and }\quad \Phi(G_q)=\IS^2\cap \{x_2=0\}.
\] 
The set $\Phi(G_p)$ is parameterized by $(\cos t,\sin t,0)$, and $\Phi(G_q)$ by $(\cos t,0,\sin t)$; evaluating along $Q_{(a,b,c)}$  shows that 
\[
(\Phi(G_p)\cup \Phi(G_q))\cap Q_{(a,b,c)}^{-1}(E)\neq \emptyset, \quad \forall E\in [a,c], 
\]
and, moreover,
\[
\quad c_2^\pm\in \Phi(G_p),\quad \Phi(G_q)\cap (Q_{(a,b,c)}^{-1}(b)\setminus\{c_2^\pm\})\neq \emptyset.  
\]
Since the set of orbits of $\tilde{\phi}^V_t$ and both sets $\Phi(G_p),\Phi(G_q)$ are invariant by the symmetries $x_i\mapsto -x_i$, for $i=1,2,3$, we conclude that $\Phi(G_p)\cup \Phi(G_q)$ has non-empty intersection with all the orbits of $\tilde{\phi}^V_t$. Therefore, $V$-GCC$_T$ is satisfied for some $T>0$ and the result follows from Corollary \ref{c:eig}.

Let us mention that six is the least number of critical points an even Morse function on $\IS^2$ may have. This is due to the fact that any such function induces a Morse function on the projective plane $\IP$. Since the Euler characteristic of $\IP$ is equal to one, the Poincaré-Hopf theorem implies:
\[
1=\chi(\IP)=\sum_{j=0}^2 (-1)^j\#\{\gamma\in\cC(V)\,:\,\gamma\text{ has index }j\}.
\]
There are at least one critical point of index zero and one of index two, therefore one must have also at least one saddle point. The number of critical points of $\cI(V)$ when viewed as a function of $\IP$ must be at least three, hence the claim.

\appendix

\section{Pseudo-differential operators and semiclassical measures}

\label{a:pdo} Here we review basic facts on the theory of semiclassical pseudo-differential operators and semiclassical measures that are used throughout the article. We refer, for instance, to \cite{DimassiSjostrand, F14, MaciaLille, Zwobook} for proofs and additional related materials. 

Let $(M,g)$ be a smooth Riemannian $d$-dimensional manifold without boundary.
Fix an atlas $(\varphi_l,U_l)$ of $M$, where each $\varphi$ is a
smooth diffeomorphism from $U_l\subset M$ onto its image $V_l$, an open set of $\IR^{d}$. We denote by 
$\varphi_l^*:\cC^{\infty}(V_l)\To \cC^{\infty}(U_l)$ the induced pull-back operators and by 
\[
\tilde{\varphi}_l \,:\,T^*U_l \To T^*V_l\,:\,(x,\xi)\longmapsto\left(\varphi_l(x),(d(\varphi_l)_x^{-1})^T\xi\right),
\]
the induced canonical transformation.
Consider now a smooth locally finite partition of unity
$(\phi_l)$ satisfying
$\sum_l\phi_l=1$ and $\phi_l\in \ml{C}_c^{\infty}(U_l)$. Then, any
function $a\in\ml{C}^{\infty}(T^*M)$ can be decomposed as 
$a=\sum_l a_l$, where $a_l:=\phi_l a$. Write
$\tilde{a}_l:=(\tilde{\varphi}_l^{-1})^*a_l\in \ml{C}_c^{\infty}(T^*V_l)$. Define the class of symbols of order $m$, depending on a small parameter $h\in(0,h_0]$:
\begin{equation}
\label{e:defpdo}S^{m}(T^{*}M):=\left\{a\in
\ml{C}^{\infty}(T^*M\times (0,h_0]):\sup_{(x,\xi),h,l}|\langle\xi\rangle^{|\beta|-m}\partial^{\alpha}_x\partial^{\beta}_{\xi}\tilde{a}_{l}(x,\xi,h)|\leq
C_{\alpha,\beta}\right\}.
\end{equation}
Given $a\in S^{m}(T^{*}M)$ and $l$, one defines the Weyl semiclassical pseudo-differential operator
$$\Op_{h}(\tilde{a}_l)u(x):=
\int_{\IR^{2d}}e^{i\xi\cdot(x-y)}\tilde{a}_l\left(\frac{x+y}{2},h\xi,h\right)u(y)dy\frac{d\xi}{(2\pi)^d},\quad \forall u\in\mathcal{S}(\mathbb{R}^d).$$
Finally, take $\psi_l\in \ml{C}_c^{\infty}(U_l)$ such
that $\psi_l=1$ close to the support of $\phi_l$. With these tools we define a Weyl semiclassical pseudo-differential operator of symbol $a\in S^{m}(T^*M)$ as follows:
\begin{equation}
\label{e:pdomanifold}\Op_{h}(a)(u):=\sum_l
\psi_l\left(\varphi_l^*\Op_{h}(\tilde{a}_l)(\varphi_l^{-1})^*\right)\left(\psi_l
u\right),\quad \forall u\in \ml{C}^{\infty}(M).
\end{equation}
It can be shown that the dependence
on the cutoffs and the atlas used to define this operator only appears at order $1$ in
$h$ (Theorem $9.10$ in~\cite{Zwobook}). 

The following properties are used several times in this article:
\begin{itemize}
\item Self-adjointness. If $a\in S^{m}(T^{*}M)$ is real-valued then $\Op_h(a)$ is self-adjoint on $L^2(M)$:
\begin{equation}\label{e:sa}
\Op_h(a)^*=\Op_h(a).
\end{equation}
\item Calderón-Vaillancourt theorem. There exist $C>0$ and $N_{d}\in\IN$ such that for every $a\in S^{0}(T^{*}M)$ one has
\begin{equation}\label{e:cv}
\|\Op_h(a)\|_{\cL(L^2(M))}\leq C  \sum_{\alpha,\beta\in\IN^d,\,|\alpha|+|\beta|\leq N_d} h^{\frac{|\alpha|+|\beta|}{2}}\sup_{T^*M}|\partial^\alpha_x\partial^\beta_\xi a|.
\end{equation}
\item Product rule. Let $a\in S^{m_1}(T^{*}M)$ and $b\in S^{m_2}(T^{*}M)$. Then there exists $r\in S^{m_1+m_2-1}(T^{*}M)$ such that:
\begin{equation}\label{e:prod}
\Op_h(a)\Op_h(b)=\Op_h(ab)+h\Op_h(r).
\end{equation}
\item Commutators. Let $a\in S^{m_1}(T^{*}M)$ and $b\in S^{m_2}(T^{*}M)$. Then there exists $r\in S^{m_1+m_2-2}(T^{*}M)$ such that:
\begin{equation}\label{e:comm}
[\Op_h(a),\Op_h(b)]=\frac{h}{i}\Op_h(\{a,b\})+h^2\Op_h(r),
\end{equation}
where $\{a,b\}$ is the canonical Poisson bracket of $a$ an $b$ in $T^*M$.
\item Functional calculus. Suppose $M$ is compact, then for every $\chi\in \cC^\infty_c(\IR)$ there exist $r\in S^{0}(T^{*}M)$ such that
\begin{equation}\label{e:fc}
\chi(-h^2\Delta)=\Op_h(\chi(\|\xi\|_x^2))+h\Op_h(r).
\end{equation}
\end{itemize}

The (semiclassical) Wigner distribution of a function $u\in L^2(M)$, which we denote by $W_u^h$ is the element of $\cD'(T^*M)$ defined by its action on test functions $a\in\cC^\infty_c(T^*M)$ by:
\begin{equation}\label{e:wigner}
\left\la W_u^h,a\right\ra:=(\Op_h(a)u\,|\,u)_{L^2(M)}.
\end{equation}
\begin{rem}\label{r:tbdd}
Suppose now that $(u_h)$ is a bounded sequence in $\cC(\IR;L^2(M))$ such that 
\begin{equation}\label{e:cmass}
\|u_h(t,\cdot)\|_{L^2(M)}=\|u_h(0,\cdot)\|_{L^2(M)}
\end{equation} 
for every $t\in \IR$. Then, by the Calderón-Vaillacourt theorem, the Wigner distributions $W^h_{u_h(t,\cdot)}$ are uniformly bounded in $\cD'(\IR\times M)$. 
\end{rem}
It can be shown (see \cite{MaciaAv, MacRiv16}), using the conservation hypothesis on the $L^2(M)$-norm \eqref{e:cmass} and G\aa{}rding's inequality that every accumulation point of the family $(W^h_{u_h})$ is an element of $L^\infty(\IR;\cM_+(T^*M))$, where $\cM_+(T^*M)$ stands for the cone of positive Radon measures on $T^*M$. Such accumulation points are called semiclassical measures of the family $(u_h)$; they are intrinsically defined on $\IR\times T^*M$, since, as we previously discussed, different definitions of $W_u^h$ obtained from the  atlas and partitions of unity used to define $\Op_h(a)$ differ by a $O(h)$ in $\cD'(\IR\times M)$.

Suppose $\mu$ is a semiclassical measure of $(u_h)$; this means that a sequence $h_n\to 0^+$ exists such that, for every $ \theta\in\cC^\infty_c(\IR)$ and $a\in\cC^\infty_c(T^*M)$,
\begin{equation}\label{e:tscm}
\int_\IR \theta(t)(\Op_{h_n}(a)u_{h_n}(t,\cdot)\,|\,u_{h_n}(t,\cdot))_{L^2(M)}dt\Tend{n}{\infty}\int_\IR\int_{T^*M}\theta(t)a(x,\xi)\mu_t(dx,d\xi)dt.
\end{equation}
If, in addition, the sequence satisfies the $h$-oscillation property:
\begin{equation}
\limsup_{h\to 0^+}\| \mathds{1}_{(R,\infty)}(-h^2\Delta)u_h(t,\cdot)\|\Tend{R}{\infty} 0,
\end{equation}
where $\mathds{1}_{(R,\infty)}$ stands for the characteristic function of the interval $(0,\infty)$, then, for every $b\in\cC_c(M)$,
\begin{equation}\label{e:proj}
\int_\IR \theta(t)b(x)|u_{h_n}(t,\cdot)|^2dx\,dt\Tend{n}{\infty}\int_\IR\int_{T^*M}\theta(t)b(x)\mu_t(dx,d\xi)dt;
\end{equation}
when $M$ is compact one has actually, for almost every $t\in\IR$,
\begin{equation}\label{e:prob}
\mu_t(T^*M)=\lim_{n\to\infty}\|u_{h_n}(t,\cdot)\|_{L^2(M)}^2.
\end{equation}

\def\cprime{$'$}

\end{document}